\def\@abssec#1{\vspace{.05in}\footnotesize \parindent .2in
{\bf #1. }\ignorespaces}
\newtheorem{theorem}{Theorem}[section]
\newtheorem{lemma}[theorem]{Lemma}
\newtheorem{corollary}[theorem]{Corollary}
\newtheorem{remark}[theorem]{Remark}
\newtheorem{hypothesis}[theorem]{Hypothesis}
\newtheorem{property}[theorem]{Property}
\def \Rm {\mathbb R}
\def \Nm {\mathbb N}
\def \Cm {\mathbb C}
\newcommand{\eps}{\varepsilon}
\newcommand{\dsum}{\displaystyle\sum}
\newcommand{\dint}{\displaystyle\int}
\newcommand{\Mf}{\mathfrak M}
\newcommand{\Xf}{\mathfrak X} 
\newcommand{\Yf}{\mathfrak Y}
\newcommand{\Sf}{\mathfrak S}
\newcommand{\cf}{\mathfrak c}
\newcommand{\mk}{\mathfrak k}
\newcommand{\mf}{{\mathfrak f}}
\newcommand{\ff}{{\mathfrak f}}
\newcommand{\fp}{{\mathfrak p}}
\newcommand{\fu}{{\mathfrak u}}
\newcommand{\cout}[1]{}
\newcommand{\trho}{\tilde\rho}
\newcommand{\dX}{\partial X}
 \renewcommand{\arraystretch}{1.5}
\title{Reconstruction of coefficients in scalar second-order elliptic equations from knowledge of their solutions}
\author{Guillaume Bal \thanks{Department of Applied Physics and 
        Applied Mathematics, Columbia University, 
        New York NY, 10027; gb2030@columbia.edu} \and Gunther Uhlmann \thanks{Department of Mathematics, University of Washington, Seattle, WA, 98195 and University of California, Irvine, CA, 92697; gunther@math.washington.edu}}
\begin{document}
 
\maketitle


\begin{abstract}
  This paper concerns the reconstruction of possibly complex-valued coefficients in a second-order scalar elliptic equation posed on a bounded domain from knowledge of several solutions of that equation. We show that for a sufficiently large number of solutions and for an open set of corresponding boundary conditions, all coefficients can be uniquely and stably reconstructed up to a well characterized gauge transformation. We also show that in some specific situations, a minimum number of such available solutions equal to $I_n=\frac12n(n+3)$ is sufficient to uniquely and globally reconstruct the unknown coefficients. This theory finds applications in several coupled-physics medical imaging modalities including photo-acoustic tomography, transient elastography, and magnetic resonance elastography.
\end{abstract}
 

\renewcommand{\thefootnote}{\fnsymbol{footnote}}
\renewcommand{\thefootnote}{\arabic{footnote}}

\renewcommand{\arraystretch}{1.1}





\section{Introduction}
\label{sec:intro}

We consider the general second-order elliptic equation:
\begin{equation}
\label{eq:galelliptic}
\nabla\cdot a\nabla u + b\cdot \nabla u + cu =0 \quad \mbox{ in } X ,\qquad u=f \quad\mbox{ on }\dX,
\end{equation}
with complex-valued coefficients such that $a$ is a symmetric, possibly complex-valued, tensor verifying the ellipticity condition for $\alpha_0>0$:
\begin{equation}
  \label{eq:ella}
  \alpha_0 |\xi|^2 \leq \xi\cdot \Re a \xi \leq \alpha_0^{-1} |\xi|^2
\end{equation}
and with $c$ such that the above equation admits a unique solution in $H^1(X)$ for $f\in H^{\frac12}(\partial X)$. Here $X$ is an open bounded domain in $\Rm^n$ with smooth boundary $\partial X$. We assume throughout this paper that $a,b,c$ and $\nabla\cdot a$ are of class $C^{0,\alpha}(\bar X)$ for some $\alpha>0$. Elliptic regularity results \cite[Theorem 6.3.7]{M-SP-66} then ensure that the solution $u$ is a strong solution of class $C^{2,\alpha}(\bar X)$ when the boundary condition is of class $C^{2,\alpha}(\partial X)$.

We assume that we have access to internal functionals given by the complex valued solutions $u_j(x)$ of \eqref{eq:galelliptic} for a given set of boundary conditions $f=f_j$ for $1\leq j\leq I$. The main objective of this paper is to show that when $I$ is sufficiently large, then the coefficients $(a,b,c)$ can be uniquely and stably reconstructed from knowledge of the corresponding $(u_j)_{1\leq j\leq I}$ up to a natural gauge transformation.

In some specific situations, for instance when $(a,b,c)$ is close to $(a_0,0,0)$ for $a_0$ a constant complex-valued matrix satisfying \eqref{eq:ella}, then the reconstructions can be achieved for $I=I_n=\frac12n(n+3)$ coefficients, which corresponds to the (complex) dimension of the unknown coefficients $(a,b,c)$ up to the (one parameter) gauge transformation. 

The mathematical description of the measurement operator considered here and the main results of the paper are presented in section \ref{sec:main}. The proofs of the main results are detailed in sections \ref{sec:local} and \ref{sec:global}. 

The theory presented in this paper finds some applications in several recent coupled-physics (also called hybrid or multi-wave) imaging modalities that are extensively studied in the bio-engineering community.  These methods arose as an attempt to devise new imaging modalities that combine high resolution with high contrast. In section \ref{sec:appli},  we consider three such modalities: Photo-Acoustic Tomography (PAT), Transient Elastography (TE) and Magnetic Resonance Elastography (MRE).  In PAT and TE, the high resolution mechanism is ultrasound. In MRE, it is Magnetic Resonance Imaging.  Our results show that in these three imaging modalities,
all the medium parameters (some of which displaying the high contrast we are after) can be reconstructed uniquely and stably. The reason is that these coefficients have a structure that allows us to uniquely reconstruct the gauge as well. 
In PAT, the main novelty of our result is that we can reconstruct general  anisotropic
diffusion coefficients. In a scalar model for TE and MRE, the main novelty is that we can not only reconstruct anisotropic coefficients but also complex-valued coefficients that account for possible dispersion and attenuation effects.



For the application to quantitative PAT, we refer the reader to e.g. \cite{BR-OL-11,BR-IP-11,BU-IP-10} and the references there. For applications in TE and MRE, we refer the reader to e.g. to \cite{MZM-IP-10} and its list of references.  For general references to the theory of hybrid inverse problems devoted to the mathematical analysis of similar coupled-physics imaging modalities, we refer the reader to e.g. \cite{A-Sp-08,B-IO-12,S-Sp-2011}. 

General real-valued anisotropic tensors were reconstructed in the context of ultrasound modulation in dimension $n=2$ in \cite{MB-IP-12}. Note that the reconstruction of anisotropic coefficients $a$ from boundary value measurements of $u$ (as in, e.g., the Calder\'on problem) can be performed only up to a very large class of changes of variables \cite{U-IP-09}. Moreover, the corresponding stability estimates are of logarithmic type, which corresponds to potentially drastic amplifications of measurement noise during the reconstruction. The results of this paper show that the availability of internal functionals such as those in PAT, TE, and MRE, allows one to reconstruct a larger class of coefficients and with significantly better stability estimates.

\section{Main results}
\label{sec:main}

\paragraph{Gauge transform.} The elliptic equation \eqref{eq:galelliptic} may be recast as
\begin{equation}
\label{eq:galelliptic2}
a:\nabla^{\otimes 2} u + (\nabla\cdot a + b) \cdot \nabla u + cu =0.
\end{equation}
Upon multiplying through by the scalar function $\tau\not=0$, we get
\begin{equation}
  \label{eq:elltau}
  a_\tau:\nabla^{\otimes 2} u + (\nabla\cdot a_\tau + b_\tau) \cdot \nabla u + c_\tau u =0,
\end{equation}
with
\begin{equation}
\label{eq:taucoeff}
a_\tau = \tau a, \quad b_\tau = \tau b-a\nabla \tau,\quad c_\tau = \tau c.
\end{equation}
This shows that the coefficients $(a,b,c)$ can be reconstructed at most up to the above gauge transformation. We say that $(a,b,c)\sim(a_\tau,b_\tau,c_\tau)$ belong to the same class of equivalence if there exists a non vanishing (sufficiently smooth with smoothness depending on context) function $\tau$ such that \eqref{eq:taucoeff} holds. Then we say that $(a,b,c)\in\cf$ the class of equivalence.

\paragraph{Measurement operator.}

For $f\in H^{\frac12}(\partial X)$, we obtain a solution $u\in H^1(X)$ and we can define the solution operator
\begin{equation}
\label{eq:soloperator}
\Sf_\cf: \begin{array}{rcl} H^{\frac12}(\partial X) &\to& H^1(X)\\ f &\mapsto& u = \Sf_\cf f.\end{array}
\end{equation}
Note that the solution operator is independent of the element $(a,b,c)\in\cf$. 

The main measurement operator we consider in this paper is defined as follows. Let $I\in\Nm^*$ and $f_i\in H^{\frac12}(\partial X)$ for $1\leq i\leq I$ be a given set of $I$ boundary conditions. Define $\mf=(f_1,\ldots,f_I)$. The measurement operator $\Mf_\mf$
\begin{equation}
\label{eq:finitemeasoperator}
\Mf_\mf: \begin{array}{rcl} \Xf &\to&\Yf^I\\ \cf &\mapsto&  \Mf_\mf (\cf) =(\Sf_\cf f_1,\ldots,\Sf_\cf f_I). \end{array}
\end{equation}
Here, $\Xf$ is a subset of a Banach space in which the unknown diffusion tensor is defined. That space will depend on the context. Also $\Yf$ is a subset of $H^1(X)$ where the solutions to \eqref{eq:galelliptic} are defined. The main objective of this paper is to consider settings in which $\Mf_\mf(\cf)$ for an appropriate choice of $\mf$ uniquely and stably determines $\cf$.

\paragraph{Main results.} The main result of this paper are summarized in the following Theorem:
\begin{theorem}
\label{thm:main}
Let $\cf$ and $\tilde\cf$ be two classes of coefficients with elements $(a,b,c)$ of class $C^{m,\alpha}(\bar X)$ for $\alpha>0$ and $\nabla\cdot a$ also of class $C^{m,\alpha}(\bar X)$ for $m=0$ or $m=1$. We assume that \eqref{eq:galelliptic} is well posed for the coefficients $\cf$.

Then for $I$ sufficiently large and for an open set (for instance in the topology of $C^{2,\alpha}(\bar X)$) of boundary conditions $\ff=(f_j)_{1\leq j\leq I}$, then $\Mf_\mf(\cf)$ uniquely determines $\cf$. Moreover, for any other $\tilde\cf$ as given above, we have the stability results
\begin{equation}
\label{eq:stabmain}
\begin{array}{rcl}
 \|(a,b+\nabla\cdot a,c)-(\tilde a,\tilde b+\nabla\cdot\tilde a,\tilde c)\|_{W^{m,\infty}(X)} &\leq& C \|\Mf_{\ff}(\cf)-\Mf_{\ff}(\tilde\cf)\|_{W^{m+2,\infty}(X)}, \\
  \|b-\tilde b\|_{L^\infty(X)} &\leq& C \|\Mf_{\ff}(\cf)-\Mf_{\ff}(\tilde\cf)\|_{W^{3,\infty}(X)},
\end{array}
\end{equation}
for $m=0,1$. 

We say that $\cf$ is in the vicinity of $\tilde\cf$ if $(a,b,c,\nabla\cdot a)$ is in the $C^{0,\alpha}(\bar X)$ vicinity of $(\tilde a,\tilde b,\tilde c,\nabla\cdot \tilde a)$ for some elements $(a,b,c)\in\cf$ and $(\tilde a,\tilde b,\tilde c)\in\tilde \cf$.

Let us assume that  $\cf$ in the vicinity of either: \\[2mm] (i) $(a_0,0,0)$ for some constant diffusion tensor $a_0$; \\[2mm] (ii) $(\gamma(x) Id_n,0,c(x))$ for some scalar coefficients $\gamma\in H^{\frac n2+4+\epsilon}(\bar X)$ and $c\in H^{\frac n2+2+\epsilon}(\bar X)$ for $\epsilon>0$; \\[2mm](iii) $(\gamma(x),0,c(x))$ for an arbitrary diffusion tensor of class $H^{5+\epsilon}(X)$ and $c$ of class $H^{3+\epsilon}(X)$ in dimension $n=2$.\\[0mm]

Then  for $I=I_n=\frac12n(n+3)$ and an open set of boundary conditions $\ff$, we have that $\Mf_\mf(\cf)$ uniquely determines $\cf$. Moreover, \eqref{eq:stabmain} holds. 
\end{theorem}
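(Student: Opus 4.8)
The plan is to linearize the problem and exploit the redundancy of having $I_n=\frac12 n(n+3)$ solutions. First I would recast the identification of $\cf$ as a question about the map from coefficients to solutions. Given two classes $\cf$ and $\tilde\cf$ with elements $(a,b,c)$ and $(\tilde a,\tilde b,\tilde c)$ producing the same data $u_j=\tilde u_j$ for $1\le j\le I$, subtract the equations \eqref{eq:galelliptic2} satisfied by $u_j$ for the two sets of coefficients. This yields, for each $j$, a linear relation
\begin{equation*}
(a-\tilde a):\nabla^{\otimes 2}u_j + (\nabla\cdot(a-\tilde a)+(b-\tilde b))\cdot\nabla u_j + (c-\tilde c)u_j = 0 \quad\text{in }X.
\end{equation*}
Writing $\delta a=a-\tilde a$, $\delta\beta=\nabla\cdot(a-\tilde a)+(b-\tilde b)$ (the combination appearing on the left of \eqref{eq:stabmain}), $\delta c=c-\tilde c$, this is a pointwise linear system in $I$ equations for the $I_n=\frac12 n(n+1)+n+1$ unknown components of $(\delta a,\delta\beta,\delta c)$ at each $x\in X$, with coefficient matrix built from $\nabla^{\otimes2}u_j$, $\nabla u_j$, $u_j$. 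If this matrix is invertible at every point — which is where the "open set of boundary conditions" hypothesis enters — then $(\delta a,\delta\beta,\delta c)\equiv 0$, i.e. $a=\tilde a$, $c=\tilde c$, and $b-\tilde b=-\nabla\cdot(a-\tilde a)=0$, giving uniqueness and, by inverting the matrix with a pointwise bound on its inverse, the stability estimates \eqref{eq:stabmain} (the loss of derivatives there reflects that the data enter through second derivatives of $u_j$, and for $b-\tilde b$ through third derivatives once one differentiates the relation for $\delta a$).

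The crux is therefore to produce, for each of the three special cases, an explicit choice of $I_n$ boundary conditions $f_j$ such that the $I_n\times I_n$ matrix
\begin{equation*}
\Big(\ \big(\nabla^{\otimes2}u_j\big)_{\text{sym}}\ \big|\ \nabla u_j\ \big|\ u_j\ \Big)_{1\le j\le I_n}
\end{equation*}
is invertible throughout $\bar X$. In case (i), where $(a,b,c)$ is near $(a_0,0,0)$ with $a_0$ constant, the natural model solutions are the "CGO-type" or polynomial exponential solutions $u_j(x)\approx e^{\rho_j\cdot x}$ for suitably chosen (complex) vectors $\rho_j$; for $a_0$ constant one can even take exact solutions of $a_0:\nabla^{\otimes2}u=0$, namely affine functions $1,x_1,\dots,x_n$ and quadratic-type harmonic polynomials, chosen so that the Hessians span the space of symmetric matrices, the gradients span $\Rm^n$, and the values are nonzero. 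A perturbation/continuity argument (the map $\cf\mapsto u_j$ is continuous in the stated norms by the elliptic regularity quoted in the introduction) then propagates invertibility of this matrix from the model to a $C^{0,\alpha}$-neighborhood. For case (ii), with $a=\gamma\,Id_n$, the equation is $\gamma\Delta u+\nabla\gamma\cdot\nabla u+cu=0$, a Schrödinger-type equation after the Liouville change of unknown, and one uses complex geometrical optics solutions with large complex frequencies $\rho_j$, $|\rho_j|\to\infty$, whose leading behavior $e^{\rho_j\cdot x}$ makes the relevant determinant nonzero; the Sobolev orders $H^{\frac n2+4+\epsilon}$, $H^{\frac n2+2+\epsilon}$ are exactly what is needed for the CGO remainder estimates and for $u_j\in C^{2,\alpha}$. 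Case (iii), in dimension $n=2$ with an arbitrary anisotropic $a$, is handled by the isothermal-coordinates reduction: in $2$D one can change variables to make $a$ conformal to the identity, reducing to the scalar case, and then again invoke CGO solutions; here $I_2=5$, matching two Hessian components (trace-free, since the trace is absorbed), two gradient components, and one value — one must be careful that the gauge has already been used to fix one scalar degree of freedom, which is why only $I_n$ and not $I_n+1$ functions are needed.

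The main obstacle is the non-vanishing-determinant step: showing that finitely many solutions can be chosen so that their $0$th, $1$st, and (symmetrized) $2$nd order jets are simultaneously linearly independent at \emph{every} point of the closure $\bar X$, not just locally or generically. Locally this follows from constructing solutions that behave like prescribed polynomials near a point (Runge-type approximation plus elliptic regularity), but to get a single family of $I_n$ boundary conditions that works globally one argues by a compactness/covering argument over $\bar X$ combined with the openness of the non-vanishing condition, or, in the perturbative regime, by starting from the constant-coefficient model where the determinant can be computed explicitly and is bounded away from zero uniformly, then using the quoted $C^{2,\alpha}$ stability of solutions under $C^{0,\alpha}$ perturbations of the coefficients to conclude that the determinant stays bounded away from zero on the whole of $\bar X$ in a neighborhood. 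Once this is in place, uniqueness is immediate from the linear system above and the stability estimates follow by the standard argument of bounding the pointwise inverse of the jet matrix and tracking the derivative losses, exactly as recorded in \eqref{eq:stabmain}; the relation $b-\tilde b=-\nabla\cdot(\tilde a-a)$ then upgrades the control of the combination $b+\nabla\cdot a$ to control of $b$ itself at the cost of one extra derivative on the data.
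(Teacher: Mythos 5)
There is a genuine gap at the heart of your argument: the pointwise jet matrix you propose to invert can never be invertible. First, a counting error: the unknowns $(\delta a,\delta\beta,\delta c)$ have $\frac12 n(n+1)+n+1=I_n+1$ components, not $I_n$, so with $I_n$ solutions your system is $I_n\times(I_n+1)$ and cannot be square. More fundamentally, since each $u_j$ solves \eqref{eq:galelliptic2}, the nonzero vector $\big(a(x),\,(\nabla\cdot a+b)(x),\,c(x)\big)$ lies in the kernel of your pointwise linear map for every $x$ and for \emph{every} choice of boundary conditions; this is exactly the infinitesimal gauge direction (from \eqref{eq:taucoeff} one checks that $\delta a=s\,a$, $\delta\beta=s(\nabla\cdot a+b)$, $\delta c=s\,c$ with $s=1-\tau$, the $\nabla\tau$ contributions cancelling in $\delta\beta$). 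Your conclusion $(\delta a,\delta\beta,\delta c)\equiv0$ would therefore recover $(a,b,c)$ exactly, contradicting the gauge invariance modulo which the theorem is explicitly stated. To repair the argument you would need to show that the kernel is \emph{exactly} this one-dimensional line, deduce $(\delta a,\delta\beta,\delta c)=s(x)\,(a,\nabla\cdot a+b,c)$ for a scalar function $s$, and then prove that $s$ integrates to an actual gauge function $\tau$ (this involves the integrability of $a^{-1}b_\tau-a^{-1}b$ as a $1$-form, cf.\ section \ref{sec:recgauge}); none of this appears in your write-up.

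The paper sidesteps the issue by a constructive route rather than a difference/linearization route: it divides by $u_1$ so that the ratios $v_j=u_{j+1}/u_1$ satisfy the zeroth-order-free equation \eqref{eq:v} with $\alpha=u_1^2a$, builds from the data the $M_n=\frac12 n(n+1)-1$ matrices $M^m$ that annihilate $\alpha$, and identifies $\alpha$ as the unique normalized matrix $M^0$ orthogonal to them --- the gauge appears there transparently as the undetermined scalar factor, fixed by the normalization \eqref{eq:alpha} and recovered separately under extra structure such as $b=0$. Your choices of model solutions for the three special cases (harmonic polynomials near a constant tensor, CGO solutions in the isotropic case, isothermal coordinates in dimension two) and your continuity and Runge-type arguments do match the paper's, but there they serve to verify the linear-independence hypotheses of Lemma \ref{lem:localclass} for the ratio construction, not to invert a full jet matrix that is structurally rank-deficient.
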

The proof of the different results described in the above theorem is presented in detail in sections \ref{sec:local} and \ref{sec:global}. 

\paragraph{Reconstruction of the gauge.} In some situations the gauge in $\cf$ can be uniquely and stably determined. Let us for instance consider the specific, practically important, case of coefficients $(a,b,c)=(\gamma,0,c)$. Then we have the following result:
\begin{corollary}\label{cor:gauge}
Under the hypotheses of the preceding theorem, and in the setting where $b=0$, we have that $\Mf_\mf(\cf)$ uniquely determines $(\gamma,0,c)$. Let us define $\gamma=\tau M^0$ where $M^0$ has a determinant equal to $1$. Then we have the following stability result
\begin{equation}
  \label{eq:stabtauM0}
  \|\tau-\tilde \tau\|_{W^{1,\infty}(X)} + \| (M^0,c)-(\tilde M^0,\tilde c)\|_{L^\infty(X)}\leq  C \|\Mf_{\ff}(\cf)-\Mf_{\ff}(\tilde\cf)\|_{W^{2,\infty}(X)}.
\end{equation}
\end{corollary}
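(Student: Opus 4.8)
The plan is to obtain Corollary~\ref{cor:gauge} from Theorem~\ref{thm:main} by tracking how the normalization $\gamma=\tau M^0$, $\det M^0=1$, interacts with the scalar gauge \eqref{eq:taucoeff}. When $b=0$ we have $b+\nabla\cdot a=\nabla\cdot\gamma$, so Theorem~\ref{thm:main} already yields that $\Mf_\mf(\cf)$ determines $\cf$, together with \eqref{eq:stabmain} read as $\|(\gamma,\nabla\cdot\gamma,c)-(\tilde\gamma,\nabla\cdot\tilde\gamma,\tilde c)\|_{L^\infty(X)}\le C\,\|\Mf_\ff(\cf)-\Mf_\ff(\tilde\cf)\|_{W^{2,\infty}(X)}$ once representatives are chosen compatibly. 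What remains is a bookkeeping issue inside a gauge orbit: (i) single out the representative with $b=0$, and (ii) transfer this $L^\infty$ control to the bound \eqref{eq:stabtauM0}, in which $\tau$ carries one derivative while $M^0$ and $c$ need only pointwise control.

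For step (i), the key observation is that $\gamma\mapsto M^0:=(\det\gamma)^{-1/n}\gamma$ is \emph{invariant} under the gauge $\gamma\mapsto\sigma\gamma$, since $\det(\sigma\gamma)=\sigma^n\det\gamma$; hence $M^0$ is determined by $\cf$ alone. This is exactly where the assumption $b=0$ pays off: for a genuinely anisotropic tensor only its conformal class is gauge-free, and $M^0$ is a canonical representative of it. The residual freedom then acts only on the pair $(\tau,c)$ by $\tau\mapsto\sigma^{1/n}\tau$, $c\mapsto\sigma c$, and the constraint $b=0$ forces $\sigma$ to be constant (from $b_\sigma=-\gamma\nabla\sigma=0$); the remaining constant is pinned down by the vicinity hypothesis of Theorem~\ref{thm:main}, i.e.\ by keeping $(\gamma,0,c)$ in a $C^{0,\alpha}(\bar X)$ neighbourhood of the fixed reference triple, which excludes the nontrivial constant rescalings. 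Combined with the Lipschitz character of $\gamma\mapsto(\det\gamma)^{-1/n}\gamma$ on the ellipticity range \eqref{eq:ella}, where $\det\gamma$ stays bounded away from $0$ and $\infty$, this already gives uniqueness and the $L^\infty$ bounds on $M^0-\tilde M^0$ and $c-\tilde c$ in \eqref{eq:stabtauM0}.

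The delicate point — and the one I expect to be the main obstacle — is the extra derivative on $\tau$: \eqref{eq:stabtauM0} asks for $\|\nabla\tau-\nabla\tilde\tau\|_{L^\infty}$ from only a $W^{2,\infty}$ bound on the data, whereas naively differentiating $\tau=(\det\gamma)^{1/n}$ brings in $\nabla\gamma$, which is controlled only at the $W^{3,\infty}$ level (the $m=1$ case of \eqref{eq:stabmain}; compare the weaker, second estimate there for $b$ alone). The route I would take is to not differentiate the reconstructed $\gamma$ but to isolate $\nabla\tau$ algebraically from the divergence relation $\nabla\cdot\gamma=M^0\nabla\tau+\tau\,\nabla\cdot M^0$. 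Inverting $M^0$ (licit by \eqref{eq:ella}) gives $\nabla\tau=(M^0)^{-1}\big(\nabla\cdot\gamma-\tau\,\nabla\cdot M^0\big)$, where $\nabla\cdot\gamma$ is exactly the quantity controlled in $L^\infty$ by the $W^{2,\infty}$ data: it enters \eqref{eq:galelliptic2} directly as a coefficient and is recovered from $\gamma:\nabla^{\otimes2}u_j+(\nabla\cdot\gamma)\cdot\nabla u_j+cu_j=0$ using solutions whose gradients span $\Rm^n$, just as in the $m=0$ case of Theorem~\ref{thm:main}. In the isotropic case $M^0\equiv Id_n$ this collapses to $\nabla\tau=\nabla\cdot\gamma$ and one concludes immediately; in the genuinely anisotropic case one must further show that $\nabla\cdot M^0$ is itself available at the same $L^\infty$/$W^{2,\infty}$-data level — e.g.\ by reconstructing in $L^\infty$ the full first-order coefficient $\nabla\cdot M^0+M^0\nabla\log\tau$ of the normalized equation $M^0:\nabla^{\otimes2}u_j+(\nabla\cdot M^0+M^0\nabla\log\tau)\cdot\nabla u_j+(c/\tau)u_j=0$, whose gauge is entirely fixed by $\det M^0=1$, and then splitting off $\nabla\cdot M^0$ with the help of the identity $\operatorname{tr}\big((M^0)^{-1}\nabla M^0\big)=0$, possibly using the vicinity structure of Theorem~\ref{thm:main} perturbatively. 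Making this splitting rigorous, with constants uniform over the ellipticity range, is the crux; the rest is a routine propagation of the above Lipschitz bounds through the stated algebraic relations to obtain \eqref{eq:stabtauM0}.
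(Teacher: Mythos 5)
Your overall route is the paper's: the anisotropic part $M^0$ is gauge-invariant and is obtained in $L^\infty$ directly from Lemma \ref{lem:localclass}, while the scalar $\tau$ is recovered from the first-order relation forced by $b=0$. The paper phrases this as the redundant transport system $\nabla\ln\tau=a_0^{-1}b_0$ for the reconstructed representative $(a_0,b_0,c_0)\in\cf$ (equivalently your identity $\nabla\cdot\gamma=M^0\nabla\tau+\tau\,\nabla\cdot M^0$), and the extra derivative on $\tau$ in \eqref{eq:stabtauM0} comes from integrating this first-order system. One genuine error in your uniqueness step: the vicinity hypothesis does \emph{not} pin down the residual constant. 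For any constant $\lambda$ near $1$, the triple $(\lambda\gamma,0,\lambda c)$ still has $b=0$, stays in the same $C^{0,\alpha}$ neighbourhood, and produces exactly the same solutions $u_j$, so the data cannot distinguish it from $(\gamma,0,c)$. The paper removes this ambiguity by assuming $\tau$ is known at one point (equivalently that the coefficients are known on $\dX$, as in the applications); some such normalization must be invoked.

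On the point you call the crux: you are right that $\nabla\tau=(M^0)^{-1}\big(\nabla\cdot\gamma-\tau\,\nabla\cdot M^0\big)$ apparently requires $\nabla\cdot M^0$, i.e.\ three derivatives of the data, in the genuinely anisotropic case, and your proposed repair does not close this. The constraint $\det M^0=1$ yields only the $n$ scalar identities ${\rm tr}\big((M^0)^{-1}\partial_k M^0\big)=0$, which cannot determine the $n$ components of $\nabla\cdot M^0$ from pointwise knowledge of $M^0$ (and in a stability estimate the difference $\nabla\cdot(M^0-\tilde M^0)$ is simply not controlled in $L^\infty$ by $\|M^0-\tilde M^0\|_{L^\infty}$ without interpolation against a priori regularity, which degrades Lipschitz to H\"older). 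Be aware that the paper's own proof is terse at exactly this point: it asserts that the first-order system for $\ln\tau$ ``gains back one derivative'' without separating the $\nabla\cdot M^0$ contribution. In the isotropic case $M^0\equiv Id_n$ (the QPAT setting) the term vanishes and your argument, like the paper's, closes cleanly; in the general anisotropic case your identification of the difficulty is well placed, but the trace-identity splitting you sketch is not a proof.
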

The proof of the corollary from the results stated in Theorem \ref{thm:main} may be found in section \ref{sec:recgauge}.
\section{Local reconstruction}
\label{sec:local}

\subsection{Reconstruction of equivalence classes}
\label{sec:equivclass}

Let $n$ be the spatial dimension and define $I_n=\frac12 n(n+3)$. We wish to reconstruct an element in the class $\cf=(a_\tau,b_\tau,c_\tau)$ for $\tau$ an arbitrary non-vanishing function from knowledge of $u_i$ for $1\leq i\leq I_n$. We perform the reconstruction locally in the vicinity of a point $x_0$. We assume that we have constructed a solution $u_1$ such that w.l.o.g. $u_1(x_0)=1$ and by continuity $u_1\not=0$ in the vicinity of $x_0$. We then define
\begin{displaymath}
  v_j= \dfrac{u_{j+1}}{u_1}, \quad 1\leq j\leq I_n-1,\qquad \alpha = u_1^2 a, \qquad \beta = \nabla\cdot (a u_1^2) + u_1^2 b 
\end{displaymath} 
and find after some algebra that 
\begin{equation}
\label{eq:v}
 \alpha : \nabla^{\otimes2} v_j + \beta\cdot\nabla v_j=0,
\end{equation}
in the vicinity of $x_0$.

We assume that $(\nabla v_1,\ldots, \nabla v_n)$ forms a frame in the vicinity of $x_0$. Then $H=(H_{ij})_{1\leq i,j\leq n}$ with $H_{ij}=\nabla v_i\cdot\nabla v_j$ is a boundedly invertible symmetric matrix with $H^{ij}$ the coefficients of $H^{-1}$. Let us define $M_n := \frac12 n(n+1)-1$. Note that $I_n=1+n+M_n$.  We then write for $1\leq m\leq M_n$:
\begin{equation}
  \label{eq:Thetajm}
  -\nabla v_{m+n} = \Theta^m_j \nabla v_j ,\qquad \Theta^m_j = - H^{jk} \nabla v_{m+n} \cdot\nabla v_k.
\end{equation}
Here, we use the convention of summation over repeated indices. We then define
\begin{equation}
\label{eq:thetajm}
 \theta_j^m = \left\{ \begin{array}{ll}
  \Theta^m_j & 1\leq j\leq n \\ 1 & j=m+n \\ 0 & \mbox{ otherwise}
\end{array}\right. , \qquad 1\leq j\leq I_n-1, \quad 1\leq m\leq M_n .
\end{equation}
Note that we have constructed the complex-valued coefficients $\theta_j^m$ in such a way that
\begin{equation}\label{eq:vanishlincomb}
\dsum_{j=1}^{I_n-1} \theta^m_j \nabla v_j =0,\qquad 1\leq m\leq M_n.
\end{equation}
We next construct the symmetric matrices 
\begin{equation}
\label{eq:Mm}
M^m = \dsum_{j=1}^{I_n-1} \theta_j^m \nabla^{\otimes 2}v_j, \quad 1\leq m\leq  M_n.
\end{equation}
We assume that the matrices $M^m$ are linearly independent. Since the (complex) dimension of symmetric matrices equals $M_n+1$, this implies the existence of a unique symmetric, complex-valued, matrix $M^0$ such that 
\begin{equation}
  \label{eq:M0}
  M^{0} : M^m = {\rm Tr} (M^0 M^m) = \delta_{0m}, \qquad 0\leq m\leq M_n.
\end{equation}
In other words, there exists a unique normalized matrix $(M^0)^*$ that is orthogonal to the constructed $M^m$ matrices for the inner product $(A,B)={\rm Tr}(A^*B)$. The construction of $M^0$ can be obtained as follows. In the constructions presented later in the paper, the identity matrix $Id_n$ is not in the span of the matrices $M^m$. We can then use the Gram Schmidt procedure to orthonogonalize $(Id_n,(M^m)_{1\leq m\leq M_n})$ and this way construct the matrix $M^0$.

Multiplying \eqref{eq:v} by $\theta^m_j$ and summing over $j$ yields the $M_n$ constraints
\begin{displaymath}
 \alpha : M^m =0, \qquad 1\leq m\leq M_n.
\end{displaymath}
This shows that $\alpha$ is proportional to the now known matrix $M^0(x)$. 
Note that since the real part of $a$ is positive definite, the matrix $\alpha$ cannot be orthogonal to $Id_n$ for the inner product $(\cdot,\cdot)$. This justifies the fact that $(Id_n,(M^m)_{1\leq m\leq M_n})$ form a free family. 
Let us define 
\begin{equation}
\label{eq:alpha}
\alpha(x) := M^0(x).
\end{equation}
Since the matrix $\alpha$ can be reconstructed up to the gauge transformation corresponding to the multiplication by an arbitrary complex-valued function, we choose the gauge as prescribed above.

The vector field $\beta$ is then uniquely determined using \eqref{eq:v} by the explicit formula:
\begin{equation}
  \label{eq:beta}
  \beta = - H^{ij} (M^0 : \nabla^{\otimes 2} v_j ) \nabla v_i.
\end{equation}

Once $(\alpha,\beta)$ are known (up to the gauge transformation), we define
\begin{equation}
\label{eq:abc}
 a=\dfrac{1}{u_1^2} \alpha,\quad b = \dfrac{1}{u_1^2} (\beta-\nabla \cdot\alpha),\quad
 c = -\dfrac{1}{u_1} (\nabla\cdot a \nabla u_1 + b\cdot\nabla u_1).
\end{equation}
This provides an explicit reconstruction of $(a,b,c)\in \cf$, the class of equivalence, which is therefore now known. The above results may be recast as follows
\begin{equation}
\label{eq:abc2}
  a=\dfrac{1}{u_1^2} \alpha, \quad \nabla\cdot a + b = \dfrac{1}{u_1^2}(\beta-a\nabla u_1^2), \quad c=-\frac{1}{u_1} \Big( (\nabla\cdot a+b)\cdot \nabla u_1 + a: \nabla^{\otimes 2} u_1\Big).
\end{equation}

We summarize the above results in the following lemma.
\begin{lemma}\label{lem:localclass}
   Let $u_i$ for $1\leq i \leq I_n$ be solutions of the elliptic equation with boundary conditions $u_i=f_i$ on $\dX$. Let us define $v_i=u_1^{-1}u_{i+1}$ for $1\leq i\leq I_n-1$ and assume that:\\[2mm] (i) in the vicinity $X_0$ of a point $x_0$, we have that $u_1\not=0$ on $X$ and $u_1(x_0)=1$,\\[2mm] (ii) the vectors $(\nabla v_1,\ldots,\nabla v_n)$ form a frame on $X_0$ with a matrix $H_{ij}=\nabla v_i\cdot \nabla v_j$ with uniform bounded inverse on $\overline{X_0}$; \\[2mm](iii) the matrices $M^m$ for $1\leq m\leq M_n$ constructed above in \eqref{eq:Thetajm}-\eqref{eq:Mm} are linearly independent.
   
   Then the class of gauge equivalence $\cf$ is uniquely determined by $\Mf_{\ff}(\cf) = (u_i)_{1\leq i\leq I_n}$ on $X_0$ in the sense that $(u_i)_i=(\tilde u_i)_i$ implies that $\cf=\tilde\cf$ where $\tilde u_i$ are the internal functionals obtained by replacing one element in $\cf$ by one element in $\tilde\cf$. 
   
Moreover the reconstruction is stable in the sense that 
\begin{equation}
\label{eq:stabgauge}
\begin{array}{rcl}
  \|(a,b+\nabla\cdot a,c)-(\tilde a,\tilde b+\nabla\cdot\tilde a,\tilde c)\|_{L^{\infty}(X_0)} 
  &\leq& C \|\Mf_{\ff}(\cf) -\Mf_{\ff}(\tilde \cf)  \|_{W^{2,\infty}(X_0)},
  \end{array}
\end{equation}
for some $(a,b,c)\in\cf$ and $(\tilde a,\tilde b,\tilde c)\in \tilde\cf$. 
\end{lemma}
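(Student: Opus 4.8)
The plan is to verify directly that the explicit formulas in \eqref{eq:v}--\eqref{eq:abc2} constitute a genuine reconstruction algorithm, and then read off the stability estimate from the fact that every step is an algebraic or first/second-order differential operation applied to the data $(u_i)_i$. First I would check the derivation of \eqref{eq:v}: writing $u_{j+1}=u_1 v_j$, substituting into \eqref{eq:galelliptic2} for $u=u_{j+1}$, subtracting $v_j$ times the equation for $u_1$, and collecting terms; the zeroth-order coefficient $c$ drops out because it multiplies $u_1 v_j - v_j u_1 = 0$, the first-order terms reorganize into $\beta\cdot\nabla v_j$ with $\beta=\nabla\cdot(au_1^2)+u_1^2 b$, and the second-order terms give $u_1^2 a:\nabla^{\otimes2}v_j = \alpha:\nabla^{\otimes2}v_j$. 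This is the ``some algebra'' referenced in the text and is routine once one is careful with $\nabla\cdot a$ versus $\nabla u_1$ cross terms.

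Next I would justify that $\alpha$ is determined up to scalar multiple. Under hypothesis (ii) the $\Theta^m_j$ in \eqref{eq:Thetajm} are well-defined (since $H$ is boundedly invertible) and the relations \eqref{eq:vanishlincomb} hold by construction; contracting \eqref{eq:v} against $\theta^m_j$ and summing over $j$ kills the $\beta$ term precisely because of \eqref{eq:vanishlincomb}, leaving $\alpha:M^m=0$ for $1\le m\le M_n$. Under hypothesis (iii) the $M^m$ span an $M_n$-dimensional subspace of the $(M_n+1)$-dimensional space of symmetric matrices, so its orthogonal complement is one-dimensional; since $\Re a>0$ forces $\alpha:Id_n\neq 0$, the identity is not in the span of the $M^m$, hence the Gram--Schmidt construction of $M^0$ from $(Id_n,(M^m)_m)$ succeeds and $\alpha$ is a scalar multiple of $M^0$. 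Fixing the gauge by $\alpha:=M^0$ is then legitimate, and \eqref{eq:beta} follows by contracting \eqref{eq:v} against $H^{ij}\nabla v_i$ and using $H^{ij}\nabla v_j\cdot\nabla v_k=\delta^i_k$, while \eqref{eq:abc}--\eqref{eq:abc2} just invert the definitions $\alpha=u_1^2a$, $\beta=\nabla\cdot(au_1^2)+u_1^2 b$ and recover $c$ from the original equation evaluated at $u_1$. This gives uniqueness: equal data $(u_i)_i=(\tilde u_i)_i$ feeds identical inputs into identical formulas, so $\cf=\tilde\cf$.

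For the stability estimate \eqref{eq:stabgauge} I would track regularity through each formula. The map data $\mapsto (v_j)$ costs one division by $u_1$ (bounded below on $X_0$ by hypothesis (i)); $H$ and $H^{-1}$ involve one derivative of $v_j$ and are controlled in $L^\infty$ by $\|(u_i)\|_{W^{1,\infty}}$; the $\Theta^m_j$, $\theta^m_j$, and hence $M^m$ involve up to two derivatives of $v_j$, so $M^m$, and then $M^0$ obtained by a (locally Lipschitz, since the $M^m$ are uniformly linearly independent) Gram--Schmidt step, are controlled in $L^\infty(X_0)$ by $\|(u_i)\|_{W^{2,\infty}(X_0)}$; finally $\beta$ in \eqref{eq:beta} and the combinations $a=u_1^{-2}\alpha$, $\nabla\cdot a+b=u_1^{-2}(\beta-a\nabla u_1^2)$, $c=-u_1^{-1}((\nabla\cdot a+b)\cdot\nabla u_1+a:\nabla^{\otimes2}u_1)$ each involve at most two derivatives of the data and no derivative of $M^0$ beyond what is already absorbed. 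Since all operations are smooth (rational with non-vanishing denominators, or polynomial) in their arguments on the relevant bounded sets, the composite map is locally Lipschitz from $W^{2,\infty}(X_0)$ data to $L^\infty(X_0)$ output, which is exactly \eqref{eq:stabgauge}; the constant $C$ depends on the uniform bounds in (i)--(iii) and on the $C^{0,\alpha}$ size of the coefficients.

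The main obstacle is bookkeeping rather than conceptual: one must make sure that the quantities appearing in hypotheses (ii)--(iii) — the lower bound on $|u_1|$, the lower bound on $|\det H|$, and the uniform linear independence of the $M^m$ (equivalently a lower bound on the smallest singular value of the $M_n\times(M_n+1)$ matrix of their entries) — are the only structural assumptions needed, and that each depends continuously on the data, so that they persist for $\tilde\cf$ in a small enough vicinity and the constant $C$ in \eqref{eq:stabgauge} can be taken uniform. A secondary technical point is the Lipschitz dependence of the orthonormalization producing $M^0$: this is standard as long as $(Id_n,(M^m)_m)$ stays uniformly bounded away from degeneracy, which is guaranteed by (iii) together with the strict ellipticity $\Re a>0$ keeping $Id_n$ off the span of the $M^m$.
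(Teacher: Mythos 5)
Your proposal is correct and follows essentially the same route as the paper: the paper's ``proof'' is precisely the constructive derivation of \eqref{eq:v}--\eqref{eq:abc2} given just before the lemma, together with a one-paragraph remark that stability is ``clear by inspection'' because each step (Gram--Schmidt on the $M^m$, the contractions giving $\alpha$ and $\beta$, the algebraic inversion in \eqref{eq:abc2}) is a multilinear or rational operation costing at most two derivatives of the data. Your additional bookkeeping — the cancellation of the $c$ and $\nabla u_1\cdot a\nabla v_j$ terms in deriving \eqref{eq:v}, the role of $\Re a>0$ in keeping $Id_n$ off the span of the $M^m$, and the persistence of the nondegeneracy conditions for nearby $\tilde\cf$ — is exactly what the paper leaves implicit.
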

The stability result is clear by inspection of the reconstruction procedure: two derivatives on $v$ are taken in the reconstruction of the matrices $M^m$ and hence of $M^0$ for instance by the Gram-Schmidt procedure, a multi-linear operation that preserves errors in the uniform norm. The same loss of derivatives is observed in the reconstruction of $(\alpha,\beta)$, and hence in $a$, $\nabla\cdot a+b$ and $c$ as can be seen in \eqref{eq:abc2}. Note that we similarly reconstruct the above coefficients in the $W^{1,\infty}$ sense, and hence $b$ in the $L^\infty$ sense, when errors are bounded in the $W^{3,\infty}$ sense as indicated in Theorem \ref{thm:main}.

\begin{remark}\label{rem:realcomplex}
  Note that the procedure described in Lemma \ref{lem:localclass} allows one to reconstruct $I_n=\frac12 n(n+3)$ {\em complex-valued} coefficients from $I_n$ {\em complex-valued} internal functionals of the form $u_j$ above or to reconstruct $I_n$ {\em real-valued} coefficients from $I_n$ {\em real-valued} functionals of the same form. 
\end{remark}

\subsection{Reconstruction of the gauge}
\label{sec:recgauge}

Let us assume that we can reconstruct $(a_0,b_0,c_0)\in \cf$ on a domain $X$, which may be a part of the domain such as $X_0$ above or the whole domain if local reconstructions of $\cf$ are patched together to provide a global reconstruction. Let us then define
\begin{displaymath}
   (a_\tau,b_\tau,c_\tau) = (\tau a_0, \tau b_0-a_0\nabla \tau, \tau c_0) \in \cf
\end{displaymath}
an other element of the equivalence class. In this section, we show that prior information about the unknown coefficient allows us to uniquely determine the unknown gauge $\tau$. 


Note that another way to represent the gauge transform is to realize that
\begin{displaymath}
  (a_\tau,a_\tau^{-1}b_\tau,c_\tau) = (\tau a_0,a_0^{-1}b_0-\nabla\ln\tau,\tau c_0). 
\end{displaymath}
In other words, $a_\tau^{-1}b_\tau-a_0^{-1}b_0$ is independent of the element in the class $\cf$. If $a_\tau^{-1}b_\tau$ is seen as the 1-form $(a_\tau^{-1})_{ij} b_j dx_i$, then $d(a_\tau^{-1}b_\tau)$ is independent of the element in $\cf$ and characterizes the class of equivalence on a simply connected domain $X$. 

Let us assume that $\nabla\cdot (a_\tau^{-1}b_\tau)=\Phi$ is known. Then we observe that 
\begin{displaymath}
  -\Delta \ln\tau = \Phi - \nabla\cdot (a_0^{-1}b_0),
\end{displaymath}
so that $\tau$ is uniquely defined on a simply connected domain $X$ if it is known at the boundary $\partial X$.

As another practical assumption to reconstruct the gauge, let us assume that $b=b_\tau$ is divergence free so that $\nabla\cdot (\tau b_0 - a_0\nabla \tau)=0$, or in other words
\begin{displaymath}
  -\nabla \cdot ( a_0 \nabla \tau) + \nabla \cdot (\tau b_0) =0.
\end{displaymath} 
Note that $a_0=u_1^{-2}M^0$. This is an elliptic equation. Provided that all coefficients are real-valued and that $a_0$ is uniformly elliptic, then this equation admits a unique solution for $\tau$ when $\tau$ is known on $\dX$.  This is a consequence of the maximum principle \cite{gt1} that does not apply in the case of complex-valued coefficients. 

Let us assume the stronger constraint that $b=b_\tau=0$. This corresponds to $\Phi=0$ above. In fact, this provides the redundant system of transport equations for $\tau$:
\begin{displaymath}
  \nabla \tau = a_0^{-1} b_0 \tau, \quad \mbox{ or equivalently} \quad \nabla \ln\tau = a_0^{-1}b_0,
\end{displaymath}
which admits a unique solution provided that $\tau$ is known at one point (and admits a solution provided that $d(a_0^{-1}b_0)=0$ for $a_0^{-1}b_0$ seen as a 1-form). This reconstruction applies for arbitrary complex valued coefficients $(a,b,c)$. In the PAT, TE, and MRE applications considered in section \ref{sec:appli}, the natural setting is with $b=0$ so that the gauge can indeed be reconstructed.

Consider the specific example of 
\begin{displaymath}
\nabla\cdot \gamma \nabla u =0  \quad \mbox{ in } X, \qquad u=f \mbox{ on } \dX.
\end{displaymath}
Let us define $\gamma = \tau M^0$. The reconstruction of $M^0$ requires taking two derivatives of the data. The above equation for $\tau$ is in fact a redundant system of first-order equations for $\ln\tau$ in which we gain back one derivative. We thus obtain the unique reconstruction of $\gamma$ with the stability estimate
\begin{displaymath}
 \|\tau -\tilde\tau \|_{W^{1,\infty}(X)} + \|M^0-\tilde M^0\|_{L^\infty(X)} \leq C \|\Mf_{\ff}(\cf) -\Mf_{\ff}(\tilde \cf)  \|_{W^{2,\infty}(X)}.
\end{displaymath}
This proves Corollary \ref{cor:gauge}.
The reconstruction of the anisotropy is less stable than that of the isotropic component. 
This is consistent with similar results obtained for the ultrasound modulation problem; see \cite{MB-IP-12}.

\subsection{Variations of the coefficients}
\label{sec:varcoef}

The above reconstruction shows that reconstructions are stable with respect to fluctuations in the measurements $\Mf(\cf)$ when the hypotheses of Lemma \ref{lem:localclass} are satisfied. We show that such hypotheses are stable with respect to small changes in the parameters $\cf$. 
\begin{lemma}
  \label{lem:regul}
  Let $u$ and $\tilde u$ be solutions of 
  \begin{displaymath}
    \nabla\cdot a \nabla u + b\cdot\nabla u + cu = \nabla\cdot\tilde a \nabla \tilde u + \tilde b\cdot\nabla\tilde u + \tilde c \tilde u=0 \mbox{ on } X_0,
\end{displaymath}
with Dirichlet conditions $u=\tilde u=f$ for $f$ of class $C^{m,\alpha}(\partial X_0)$. Then
\begin{equation}
\label{eq:regulu}
  \|u-\tilde u\|_{C^{m+2,\alpha}(X_0)} \leq C \| (\cf,\nabla\cdot a) -(\tilde \cf,\nabla\cdot\tilde a)\|_{C^{m,\alpha}(X_0)},
\end{equation}
for some positive constant $C$ independent of $\cf$ and of $\tilde\cf$ for $\tilde\cf$ bounded by $M$ in $C^{m,\alpha}$. 
\end{lemma}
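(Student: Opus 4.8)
\textbf{Proof proposal for Lemma \ref{lem:regul}.}

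The plan is to treat $w = u - \tilde u$ as the solution of a single linear elliptic equation with a right-hand side controlled by the difference of the coefficients, and then invoke the Schauder interior-and-boundary estimates of the same flavor already used in the introduction (elliptic regularity in $C^{k,\alpha}$ spaces). First I would subtract the two equations. Writing $L u = \nabla\cdot a\nabla u + b\cdot\nabla u + cu$ and $\tilde L$ for the analogous operator with tilde coefficients, the identity $Lu = \tilde L\tilde u = 0$ gives $L w = L u - L\tilde u = -(L - \tilde L)\tilde u$, i.e.
\begin{displaymath}
  \nabla\cdot a\nabla w + b\cdot\nabla w + cw = -\nabla\cdot\big((a-\tilde a)\nabla\tilde u\big) - (b-\tilde b)\cdot\nabla\tilde u - (c-\tilde c)\tilde u =: S,
\end{displaymath}
with $w = 0$ on $\partial X_0$. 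The point is that $w$ solves an elliptic problem with the \emph{fixed} coefficients $(a,b,c)$ (which satisfy the ellipticity bound \eqref{eq:ella} and lie in $C^{m,\alpha}$), homogeneous Dirichlet data, and a source $S$.

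Next I would bound the source. Expanding the divergence, $S = -(a-\tilde a):\nabla^{\otimes 2}\tilde u - \big(\nabla\cdot(a-\tilde a) + (b-\tilde b)\big)\cdot\nabla\tilde u - (c-\tilde c)\tilde u$, so that $S$ involves the coefficient differences in $C^{m,\alpha}$ multiplied by derivatives of $\tilde u$ up to order $2$. Since $\tilde\cf$ is bounded by $M$ in $C^{m,\alpha}$ and the problem for $\tilde u$ is well posed, the a priori Schauder estimate for $\tilde u$ (this is exactly the regularity statement recalled after \eqref{eq:galelliptic}, with constant depending only on $M$, $\alpha_0$, $X_0$ and the $C^{m,\alpha}$ norm of $f$) gives $\|\tilde u\|_{C^{m+2,\alpha}(X_0)} \leq C(M)$. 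Multiplying, one gets $\|S\|_{C^{m,\alpha}(X_0)} \leq C(M)\,\|(\cf,\nabla\cdot a) - (\tilde\cf,\nabla\cdot\tilde a)\|_{C^{m,\alpha}(X_0)}$; the product estimate is valid because $C^{m,\alpha}$ is a Banach algebra. To handle the divergence-form term cleanly for $m=0$ one keeps $S$ in divergence form and uses the $C^{1,\alpha}$ estimate for $w$ driven by a $C^{0,\alpha}$ right-hand side in divergence form; for $m=1$ one expands fully and uses the standard second-order Schauder estimate.

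Finally, applying the global Schauder estimate to the $w$-equation, $\|w\|_{C^{m+2,\alpha}(X_0)} \leq C\big(\|S\|_{C^{m,\alpha}(X_0)} + \|w\|_{C^{m+1,\alpha}(\partial X_0)}\big) = C\|S\|_{C^{m,\alpha}(X_0)}$ since $w$ vanishes on the boundary, and combining with the previous bound yields \eqref{eq:regulu}. The constant is independent of $\cf$ and $\tilde\cf$ because the Schauder constant depends on $(a,b,c)$ only through the ellipticity constant $\alpha_0$ and an upper bound on the $C^{m,\alpha}$ norms, both of which are controlled uniformly in the stated regime. The main obstacle, and the only place requiring care, is the bookkeeping of derivatives in the divergence-form term $\nabla\cdot((a-\tilde a)\nabla\tilde u)$: for $m=0$ one must not differentiate $a-\tilde a$ (one would lose a derivative one does not have), so the argument must be organized around the divergence-form Schauder estimate rather than the classical one; alternatively, one can absorb $\nabla\cdot a$ into a first-order coefficient, which is why $\nabla\cdot a$ appears explicitly in the hypothesis and in the norm on the right-hand side of \eqref{eq:regulu}.
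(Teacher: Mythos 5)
Your proposal is correct and follows essentially the same route as the paper: the paper's own proof simply sets $w=\tilde u-u$, subtracts the two equations to obtain an elliptic equation for $w$ with a source built from the coefficient differences, and then cites standard Schauder-type regularity results (Gilbarg--Trudinger for real coefficients, Morrey for complex ones). Your write-up is a more detailed version of that same argument, with the additional (and sensible) bookkeeping about the divergence-form term and the role of $\nabla\cdot a$, which the paper leaves implicit.
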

\begin{proof}
  Let $w=\tilde u-u$. We find
   \begin{displaymath}
\nabla\cdot (a-\tilde a)\nabla u + (b-\tilde b)\cdot\nabla u + (c-\tilde c) u =
\nabla\cdot\tilde a \nabla w+ \tilde b \cdot\nabla w + \tilde c w.
\end{displaymath} 
The proof then follows from standard regularity results; see \cite{gt1} for the case of real-valued coefficients and \cite[Theorem 6.3.7]{M-SP-66} for the case of complex-valued coefficients.
\end{proof}
With $m=0$, we deduce that when the hypotheses of Lemma \ref{lem:localclass} are verified for the coefficients $\cf$, then they are verified with the same boundary conditions for all coefficients $\tilde\cf$ that are sufficiently close to $\cf$ in the sense given above.

\section{Global reconstruction}
\label{sec:global}

We now consider several settings in which global reconstructions of $\cf$ are possible. We refer to the preceding section and the section on applications for reconstructions of the gauge under additional information.

\subsection{Global reconstructions close to constant tensor}
\label{sec:constant}

We start with reconstructions in the vicinity of $a=Id_n$, $b=0$, and $c=0$, where $Id_n$ is the identity matrix in dimension $n$. The main interest of this result is that the boundary conditions $f_i$ are explicit and particularly simple. Moreover, in the case where the coefficients $(a,b,c)$ are complex-valued, the following result shows that the $I_n=\frac12 n(n+3)$ unknown complex-valued coefficients may uniquely and stably be reconstructed from exactly $I_n$ complex-valued internal functionals. The same proof shows that when all coefficients are real-valued, then the $I_n=\frac12 n(n+3)$ unknown real-valued coefficients may uniquely and stably be reconstructed from exactly $I_n$ real-valued internal functionals; see remark \ref{rem:realcomplex}.

\begin{theorem}
  \label{thm:identity}
  Let $\Xf$ be the space of $\cf$ sufficiently close to $(Id_n,0,0)$ in the sense of Lemma \ref{lem:regul} with $m=0$. Let $I_n=\frac12 n(n+3)$ and $f_i$ be the traces on $\partial X$ of $I_n$ homogeneous harmonic polynomials of degree equal to $1$ or $2$ (see the proof for the description of the polynomials). Then $\Mf_\mf$ from $\Xf$ to $\Yf^{I_n}$ is injective. Moreover, the stability result \eqref{eq:stabgauge} holds for $X_0=X$. 
\end{theorem}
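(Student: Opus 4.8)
The plan is to reduce Theorem \ref{thm:identity} to the local result of Lemma \ref{lem:localclass} plus Lemma \ref{lem:regul}, with the only real work being the explicit choice of boundary conditions that makes hypotheses (i)--(iii) of Lemma \ref{lem:localclass} hold \emph{globally} on all of $X$. First I would treat the reference coefficients $(Id_n,0,0)$ and exhibit explicit harmonic polynomials realizing the frame and independence conditions. Take $f_1\equiv 1$, so $u_1\equiv 1$ solves the unperturbed equation; for the next $n$ functionals take the linear polynomials $f_{1+k}=x_k$, $1\le k\le n$, so that $u_{1+k}=x_k$ and hence $v_k=x_k$, giving $\nabla v_k=e_k$ and $H=Id_n$ — conditions (i) and (ii) hold trivially and globally for the reference. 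For the remaining $M_n=\frac12n(n+1)-1$ functionals take the traces of the homogeneous harmonic polynomials of degree $2$: these span the space of traceless symmetric quadratic forms, which has dimension exactly $M_n$. For such $p_m$ we have $u$-solutions $u=p_m$, so $v_{n+m}=p_m$, $\nabla^{\otimes2}v_{n+m}$ is the constant traceless symmetric matrix associated with $p_m$, and $\Theta^m_j\nabla v_j$ vanishes because... well, one checks that with $v_j=x_j$ the combination in \eqref{eq:Mm} reduces to $M^m=\nabla^{\otimes2}p_m$ itself. Since the $\nabla^{\otimes2}p_m$ are, by construction, a basis of the traceless symmetric matrices, they are linearly independent and $Id_n$ is not in their span, so condition (iii) and the non-orthogonality to $Id_n$ both hold. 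Thus $M^0=\frac1n Id_n$ for the reference and the reconstruction returns $(Id_n,0,0)$, as it must.

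Next I would invoke Lemma \ref{lem:regul} with $m=0$: the solution map $\cf\mapsto(u_1,\dots,u_{I_n})$ is continuous from a $C^{0,\alpha}(\bar X)$-neighborhood of $(Id_n,0,0)$ into $C^{2,\alpha}(\bar X)^{I_n}$, uniformly. Since $u_1\equiv1$, $v_j=u_{j+1}/u_1$, $H_{ij}=\nabla v_i\cdot\nabla v_j$, and the matrices $M^m$ in \eqref{eq:Mm} are all continuous functions of the $u_i$ and their derivatives up to order two in the $C^0(\bar X)$ topology, small $C^{0,\alpha}$ perturbations of $\cf$ produce small $C^2(\bar X)$ perturbations of $v_j$, $H$, and $M^m$. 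Openness of the conditions ``$u_1\ne0$ on $\bar X$'', ``$H$ boundedly invertible on $\bar X$'', and ``$M^1,\dots,M^{M_n}$ linearly independent with $Id_n$ not in their span'' (the latter being an open condition on a finite-dimensional collection of matrices, quantified uniformly over $\bar X$ by compactness) then guarantees that for $\cf$ in a sufficiently small $C^{0,\alpha}$-neighborhood of $(Id_n,0,0)$ — which is precisely the definition of $\Xf$ in the statement once the neighborhood is chosen small enough — all hypotheses of Lemma \ref{lem:localclass} hold with $X_0=X$. Here is the one point that needs care: Lemma \ref{lem:regul} compares two \emph{different} coefficient sets with the \emph{same} boundary data, so I first compare the perturbed $\cf$ against the reference to get the frame/independence conditions, and separately, for the stability estimate, compare $\cf$ against $\tilde\cf$.

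With the hypotheses of Lemma \ref{lem:localclass} verified on $X_0=X$, injectivity of $\Mf_\mf$ on $\Xf$ is immediate: if $\Mf_\mf(\cf)=\Mf_\mf(\tilde\cf)$ then $(u_i)=(\tilde u_i)$ on $X$ (note both solve their respective equations with the same $f_i$, and $\tilde u_i$ is by definition the internal functional for $\tilde\cf$), and Lemma \ref{lem:localclass} gives $\cf=\tilde\cf$. The stability estimate \eqref{eq:stabgauge} on $X_0=X$ is then exactly the conclusion of Lemma \ref{lem:localclass}, the constant $C$ being uniform over $\Xf$ because, by Lemma \ref{lem:regul}, all the quantities entering the reconstruction ($u_1^{-1}$, $H^{-1}$, the Gram--Schmidt coefficients producing $M^0$) are bounded uniformly on the neighborhood.

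The main obstacle, and the only genuinely substantive step, is verifying condition (iii) — linear independence of the $M^m$ together with $Id_n\notin\mathrm{span}(M^m)$ — \emph{and} pinning down that with the stated polynomial data the $M^m$ really do reduce to $\nabla^{\otimes2}p_m$ at the reference; the index bookkeeping in \eqref{eq:Thetajm}--\eqref{eq:Mm} has to be carried out honestly to see that the correction terms $\Theta^m_j\nabla v_j$ drop out when $v_j=x_j$. Everything else — the harmonic polynomial count $1+n+M_n=I_n$, the perturbation/openness argument, injectivity, and the stability bound — is routine once Lemmas \ref{lem:localclass} and \ref{lem:regul} are in hand.
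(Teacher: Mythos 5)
Your proposal is correct and follows essentially the same route as the paper: take $u_1\equiv1$, the linear polynomials $x_k$, and the $M_n$ homogeneous harmonic quadratics (the paper lists them explicitly as $x_ix_j$ and $\frac12(x_i^2-x_{i+1}^2)$), observe that the Hessians of the linear $v_j$ vanish so that $M^m=\nabla^{\otimes2}p_m$ is exactly the traceless symmetric matrix of $p_m$, and then perturb via Lemma \ref{lem:regul}. The one verbal stumble — ``$\Theta^m_j\nabla v_j$ vanishes'' — is immaterial since you immediately give the correct reason (the correction terms in \eqref{eq:Mm} involve $\nabla^{\otimes2}v_j=0$, not $\nabla v_j$), and the step you flag as the main obstacle is indeed the trivial one you resolve.
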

This is therefore a global (in space) stability result but only for classes $\cf$ that admit an element sufficiently close to $(Id_n,0,0)$. In fact, the reconstruction works for $\cf$ close to $(a_0,0,0)$ for $a_0$ arbitrary elliptic as we observe in Theorem \ref{thm:globalconst} below.

\begin{proof}
Let $u_1=1$ be the constant solution. Let then $v_j=x_j$ for $1\leq j\leq n$ be the homogeneous polynomials of degree $1$. Finally, let us denote by $v_{ij}=x_ix_j$ and $w_{i}(x)=\frac12(x_i^2-x_{i+1}^2)$ the homogeneous harmonic polynomials of degree two for $1\leq i<j\leq n$. The other such polynomials, such as for instance $\frac12(x_1^2-x_3^2)$ can be constructed by linear combination of the polynomials $w_i$. We have thus constructed $1+n+\frac12 n(n-1)+n-1=\frac12n(n+3)=I_n$ harmonic homogeneous polynomials of degree less than or equal to $2$.

We verify that 
\begin{displaymath}
 \nabla u_1 =0, \quad \nabla v_i=e_i,\quad \nabla v_{ij} = x_je_i+x_ie_j,\quad
 \nabla w_i = x_i e_i-x_{i+1}e_{i+1}.
\end{displaymath}
Moreover, 
\begin{displaymath}
\nabla^{\otimes 2} v_{ij} = e_i\otimes e_j+e_j\otimes e_i,\qquad
\nabla^{\otimes 2} w_i = e_i\otimes e_i - e_{i+1}\otimes e_{i+1}.
\end{displaymath}

Let us define $\Theta^{ij}_k$ for $1\leq i<j\leq n$ and $1\leq k\leq n$ such that 
\begin{displaymath}
  -\nabla v_{ij} = \Theta^{ij}_k \nabla v_k, \qquad \mbox{ i.e., } \qquad \Theta^{ij}_k=
  \left\{\begin{array}{ll} 
     -x_j & \mbox{ when } k=i \\ -x_i & \mbox{ when } k=j \\ 0 & \mbox{otherwise}.
  \end{array}\right.
\end{displaymath}
Then we find that the corresponding matrices $M^m$ are defined by
\begin{displaymath}
  M^{ij} = \nabla^{\otimes 2} v_{ij} = e_i\otimes e_j+e_j\otimes e_i.
\end{displaymath}
   
Let us now define $\Theta^i_k$ for $1\leq i\leq n-1$ and $1\leq k\leq n$ such that 
\begin{displaymath}
  -\nabla w_{i} = \Theta^{i}_k \nabla v_k, \qquad \mbox{ i.e., } \qquad \Theta^{i}_k=
  \left\{\begin{array}{ll} 
     -x_i & \mbox{ when } k=i \\ x_{i+1} & \mbox{ when } k=i+1 \\ 0 & \mbox{otherwise}.
  \end{array}\right.
\end{displaymath}
Then we find that the corresponding matrices $M^m$ are defined by
\begin{displaymath}
  M^{i} = \nabla^{\otimes 2} w_{i} = e_i\otimes e_i-e_{i+1}\otimes e_{i+1}.
\end{displaymath}
The matrices $M^{ij}$ for $1\leq i<j\leq n$ and $M^i$ for $1\leq i\leq n-1$ form a free family of dimension $\frac12 n(n-1) + n-1= \frac12 n(n+1) -1=M_n$ as can easily be verified. They are orthogonal to the matrix $M^0=Id_n$.

Let now $\cf$ be close to $(Id_n,0,0)$ in the sense given in Lemma \ref{lem:regul} and let the functions $u_i$ for $1\leq i\leq I_n$ be solutions of the equation \eqref{eq:galelliptic} with boundary conditions $f_i$ that are the traces of the harmonic polynomials constructed above. (This means that $f_1=1$, $f_2=x_1$, and so on on $\dX$.)

By continuity of the solution to \eqref{eq:galelliptic} stated in Lemma \ref{lem:regul}, the linear independence of the vectors $\nabla v_j$ for $1\leq j\leq n$ still holds. The linear combinations $\theta^m_{j}$ in \eqref{eq:thetajm} and the  matrices $M^m$ in \eqref{eq:Mm} constructed by continuity from the case $\cf=(Id_n,0,0)$ still satisfy \eqref{eq:vanishlincomb} and the fact that the matrices $M^m$ are linearly independent. This ensures the existence of a matrix $M^0$ close to the identity matrix such that $\alpha=\tau M^0$ for some unknown scalar quantity $\tau$. We may then apply Lemma \ref{lem:localclass}. This concludes the proof of the theorem.
\end{proof}

We presented the above result for $\gamma$ in the vicinity of $Id_n$ in order to obtain a simple proof of a construction that satisfies the hypotheses of Lemma \ref{lem:localclass} and because the construction also appears in a later section. In fact, the result may be generalized as follows.
\begin{theorem}
\label{thm:globalconst}
The results of Theorem \ref{thm:identity} hold for $\Xf$ the space of $\cf$ sufficiently close to $(a_0,0,0)$, where $a_0$ is an arbitrary constant symmetric matrix satisfying \eqref{eq:ella}.
\end{theorem}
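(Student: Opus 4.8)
The plan is to re-run the proof of Theorem~\ref{thm:identity} with $Id_n$ replaced by the constant tensor $a_0$ and the harmonic polynomials replaced by their $a_0$-analogues, and then to invoke Lemma~\ref{lem:regul} exactly as in that proof to propagate the construction to coefficients in a vicinity of $(a_0,0,0)$.

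First I would produce the required $I_n$ solutions at the base point $\cf=(a_0,0,0)$. Since $a_0$ is constant, a polynomial $p$ solves $\nabla\cdot a_0\nabla p=0$ precisely when $a_0:\nabla^{\otimes2}p=\mathrm{Tr}(a_0\,\nabla^{\otimes2}p)=0$. I take $u_1\equiv1$; $v_j=x_j$ for $1\le j\le n$ (every affine function is $a_0$-harmonic); and $v_{n+m}(x)=\tfrac12\,x^\top S_m\,x$ for $1\le m\le M_n$, where $S_1,\dots,S_{M_n}$ is a basis of the hyperplane $\{S=S^\top:\mathrm{Tr}(a_0S)=0\}$ of complex symmetric matrices, which has dimension $\tfrac12 n(n+1)-1=M_n$. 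This uses $1+n+M_n=I_n$ functions. One checks directly that $\nabla v_j=e_j$, so $(\nabla v_1,\dots,\nabla v_n)$ is a frame with $H_{jk}=\delta_{jk}$; that $\nabla^{\otimes2}v_j=0$ for $j\le n$ while $\nabla^{\otimes2}v_{n+m}=S_m$; and that with $\Theta^m_j$, $\theta^m_j$ as in \eqref{eq:Thetajm}--\eqref{eq:thetajm} the matrices of \eqref{eq:Mm} are $M^m=\sum_j\theta^m_j\nabla^{\otimes2}v_j=S_m$. Hence the $M^m$ are linearly independent, and since $\Re a_0$ is positive definite one has $\Re\,\mathrm{Tr}(a_0)=\mathrm{Tr}(\Re a_0)>0$, so $\mathrm{Tr}(a_0)\neq0$ and $Id_n$ does not lie in the span of the $M^m$. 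Thus $(Id_n,M^1,\dots,M^{M_n})$ is a free family, the matrix $M^0$ of Lemma~\ref{lem:localclass} is well defined, and $\alpha=u_1^2a_0=a_0$ is proportional to $M^0$. All three hypotheses of Lemma~\ref{lem:localclass} therefore hold for $(a_0,0,0)$ on $X_0=X$: $u_1\equiv1$ never vanishes, $H\equiv Id_n$ has uniformly bounded inverse, and the $M^m$ are globally constant and linearly independent.

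Next I would pass to a vicinity. Let $f_i$ be the traces on $\partial X$ of the $I_n$ polynomials above; they are homogeneous of degree $1$ or $2$ and are fixed once $a_0$ is fixed. For $\cf$ in the $C^{0,\alpha}(\bar X)$-vicinity of $(a_0,0,0)$, Lemma~\ref{lem:regul} with $m=0$ gives that the solutions $u_i$ of \eqref{eq:galelliptic} with data $f_i$ are $C^{2,\alpha}$-close to these polynomials; in particular $u_1$ stays bounded away from $0$, so the $v_i=u_1^{-1}u_{i+1}$ are $C^{2,\alpha}$-close to $x_i$ resp.\ $\tfrac12 x^\top S_mx$. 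Consequently $(\nabla v_1,\dots,\nabla v_n)$ is still a frame with $H$ of uniformly bounded inverse, the $M^m$ built by \eqref{eq:Thetajm}--\eqref{eq:Mm} are still linearly independent, and $Id_n$ is still outside their span---all open conditions. Lemma~\ref{lem:localclass} then applies on $X_0=X$ and yields injectivity of $\Mf_\mf$ on $\Xf$ together with the stability estimate \eqref{eq:stabgauge}, which is precisely the assertion of Theorem~\ref{thm:identity} for this $a_0$.

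I do not foresee a genuine obstacle; the argument is bookkeeping parallel to Theorem~\ref{thm:identity}. The one point that must be handled with care is that $a_0$ may be truly complex: one cannot simply reduce to the identity case by a linear change of variables $y=\Phi^{-1}x$ with $a_0=\Phi\Phi^\top$, since such $\Phi$ is complex and $\Phi^{-1}X$ is not a subdomain of $\Rm^n$. (For $a_0$ real and positive definite this change of variables does work and gives an alternative proof, the $a_0$-harmonic polynomials being the pullbacks of the ordinary harmonic polynomials used in Theorem~\ref{thm:identity}.) Working with $a_0$-harmonic polynomials directly avoids this, and the only structural inputs are $\dim\{S=S^\top:\mathrm{Tr}(a_0S)=0\}=M_n$ and $\mathrm{Tr}(a_0)\neq0$, both immediate from $a_0$ being symmetric with positive-definite real part.
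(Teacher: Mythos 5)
Your proposal is correct and follows essentially the same route as the paper: the paper likewise takes $u_1=1$, $v_j=x_j$, and $v_{n+m}=\tfrac12 Q_m x\cdot x$ with $(Q_m)$ a basis of the $M_n$-dimensional space of symmetric matrices satisfying $a_0:Q_m=0$, observes $M^m=Q_m$ since $\nabla^{\otimes 2}v_j=0$ for $j\le n$, and concludes via Lemma~\ref{lem:localclass} and continuity (Lemma~\ref{lem:regul}). Your explicit verification that $Id_n$ lies outside the span of the $M^m$ because $\mathrm{Tr}(a_0)\neq 0$, and your caveat about the complex change of variables, are correct refinements of details the paper leaves implicit.
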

\begin{proof}
  The proof is very similar to that of the preceding theorem and is in some sense included in the proof of Theorem \ref{thm:global} below, to which we refer for the details. The construction of $u_1=1$ and $v_j=x_j$ is the same as that of Theorem \ref{thm:identity}. The solutions $v_{n+m}$ are then constructed as 
  \begin{displaymath}
   v_{n+m} = \dfrac12 Q_{m}x \cdot x, \qquad 1\leq m\leq \frac12n(n+1)-1,
\end{displaymath}
with $Q_{m}$ forming a family of $M_n=\frac12n(n+1)-1$ linearly independent matrices that are orthogonal to $a_0^*$, or in other words, such that $a_0:Q_{ij}=0$. The linear combinations $\theta^m_j$ are then constructed as in Theorem \ref{thm:identity} with the matrices $M^m=Q_m$ since $\nabla^{\otimes 2} v_j=0$ for $1\leq j\leq n$. This allows us to verify the hypotheses of Lemma \ref{lem:localclass} globally on $X=X_0$ for boundary conditions equal to the traces of the polynomials $1$, $x_j$, $\frac12 Q_{m}x \cdot x$, and by continuity for an open set of boundary conditions and for all coefficients $\cf$ sufficiently close to $(a_0,0,0)$.
\end{proof}

\subsection{Global reconstructions close to isotropic tensor}
\label{sec:isotropic}

Let us generalize the above result by assuming that $a$ is in the vicinity of $\gamma(x) Id_n$ where $\gamma$ is a scalar real-valued (hence positive) diffusion coefficient. We still assume that $b$ is in the vicinity of $0$. Also, $c$ is  an arbitrary complex-valued potential so that \eqref{eq:galelliptic} is uniquely solvable. Then we have the following result.
\begin{theorem}
  \label{thm:isotropic}
  Let $\gamma(x)\in H^{\frac n2+4+\epsilon}(X)$ and $c(x)\in H^{\frac n2+2+\epsilon}(X)$ for $\epsilon>0$ with $\Sf_\cf$ in \eqref{eq:soloperator} bounded.
  Let $\Xf$ be the space of $\cf$ sufficiently close to $(\gamma(x)Id_n,0,c(x))$ in the sense of Lemma \ref{lem:regul} with $m=0$. Let $I_n=\frac12 n(n+3)$. There there exists an open set of $(f_i)_{1\leq i\leq I_n}$ (in any topology of sufficiently smooth functions on $\dX$) such that  $\Mf_\mf$ from $\Xf$ to $\Yf^{I_n}$ is injective. Moreover, the stability result \eqref{eq:stabgauge} holds for $X_0=X$. 
\end{theorem}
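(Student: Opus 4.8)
The plan is to reduce Theorem~\ref{thm:isotropic} to an application of Lemma~\ref{lem:localclass} together with the continuity statement of Lemma~\ref{lem:regul}, exactly as in the proof of Theorem~\ref{thm:identity}, but where the role of the explicit harmonic polynomials is now played by \emph{complex geometrical optics} (CGO) solutions of the background equation $\nabla\cdot\gamma\nabla u + c u =0$. The point is that when $a=\gamma\,Id_n$ and $b=0$, the background equation, after the Liouville change of unknown $u\mapsto \gamma^{1/2}u$, becomes a Schr\"odinger equation $\Delta w + q w =0$ with $q=-\gamma^{-1/2}\Delta(\gamma^{1/2}) - c\gamma^{-1}$, and the Sobolev hypotheses $\gamma\in H^{\frac n2+4+\epsilon}$, $c\in H^{\frac n2+2+\epsilon}$ are precisely what is needed to place $q\in H^{\frac n2+\epsilon}\subset L^\infty$ so that CGO solutions $w_\rho = e^{\rho\cdot x}(1+\psi_\rho)$ exist, with $\rho\in\Cm^n$, $\rho\cdot\rho=0$, $|\rho|$ large, and $\|\psi_\rho\|_{L^\infty}\to 0$ as $|\rho|\to\infty$; the extra derivatives likewise give $C^2$ control of $w_\rho$ and its corrector. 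From these one builds $I_n$ solutions $u_i=\gamma^{-1/2}w_{\rho_i}$ of the background equation.

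The key steps, in order, are as follows. First, fix a point $x_0\in X$ and choose $u_1 = \gamma^{-1/2}w_{\rho_1}$; since $w_{\rho_1}=e^{\rho_1\cdot x}(1+\psi_{\rho_1})$ is nonvanishing for $|\rho_1|$ large, we may normalize so that $u_1(x_0)=1$ and $u_1\neq 0$ on a neighborhood $X_0$, which after the usual patching argument can be taken to be all of $X$ (compactness of $\bar X$ allows finitely many such choices). Second, form $v_j = u_{j+1}/u_1$ for $1\le j\le I_n-1$ built from further CGO solutions with phases $\rho_2,\dots,\rho_{I_n}$; writing $v_j \approx e^{(\rho_{j+1}-\rho_1)\cdot x}$ to leading order, one must select the phases so that (ii) the vectors $(\nabla v_1,\dots,\nabla v_n)$ form a frame on $\bar X$ with uniformly bounded inverse Gram matrix $H$, and (iii) the symmetric matrices $M^m$ of \eqref{eq:Mm} are linearly independent with $Id_n$ outside their span. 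To leading order $\nabla v_j \approx (\rho_{j+1}-\rho_1)v_j$ and $\nabla^{\otimes 2}v_j \approx (\rho_{j+1}-\rho_1)^{\otimes 2}v_j$, so (ii) amounts to choosing $n$ of the difference vectors $\zeta_j=\rho_{j+1}-\rho_1$ to be linearly independent, and (iii) amounts to choosing the remaining $M_n$ of the rank-one symmetric matrices $\zeta_j^{\otimes 2}$ (after projecting off the $\nabla v_1,\dots,\nabla v_n$ directions via \eqref{eq:thetajm}) to be linearly independent in the $(M_n+1)$-dimensional space of symmetric matrices complementary to $Id_n$ — generically true, since one can pick the $\zeta_j$ among null vectors of the complexified quadratic form spanning all rank-one symmetric matrices. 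Third, having verified (i)--(iii) for the background coefficients $(\gamma Id_n,0,c)$ with boundary data $f_i = u_i|_{\dX}$, invoke Lemma~\ref{lem:regul} with $m=0$: for $\cf$ in the $C^{0,\alpha}$ vicinity of $(\gamma Id_n,0,c)$, the solutions with the \emph{same} boundary data $f_i$ are $C^{2,\alpha}$-close to the $u_i$, so the open conditions (ii)--(iii) persist, and Lemma~\ref{lem:localclass} yields injectivity of $\Mf_\mf$ on $\Xf$ together with the stability estimate \eqref{eq:stabgauge} on $X_0=X$. Finally, the set of admissible $\ff=(f_i)$ is open in any sufficiently smooth topology on $\dX$ because (ii)--(iii) are open conditions and the solution map depends continuously on the boundary data.

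The main obstacle I expect is Step~2: arranging that a \emph{single} family of $I_n$ CGO phases simultaneously yields a frame from the first $n$ difference vectors \emph{and} $M_n$ independent second-order symmetric matrices $M^m$ that avoid $Id_n$, \emph{uniformly over all of $\bar X$} rather than merely near $x_0$. The subtlety is that $\nabla v_j$ and $\nabla^{\otimes 2}v_j$ are only approximately the constant tensors $\zeta_j$, $\zeta_j^{\otimes 2}$ — there are $x$-dependent correctors of size $O(|\rho|^{-1})$ and also genuine $x$-dependence hidden in the $e^{\zeta_j\cdot x}$ prefactors (which cancel in ratios but not in the way the $\theta^m_j$ are defined pointwise). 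One handles this by taking $|\rho_i|$ large enough that the correctors are negligible in $C^1$ and $C^2$ on $\bar X$, and by choosing the imaginary parts of the $\rho_i$ bounded (only the real parts large) so the exponential prefactors are bounded above and below on $\bar X$; then the frame and independence conditions, being determinant-type inequalities that hold strictly for the constant-coefficient model, hold uniformly by a perturbation/compactness argument. This is essentially the content deferred to "the proof of Theorem~\ref{thm:global} below," so here I would state the construction, verify the leading-order genericity, and cite that the uniform estimates follow from standard CGO bounds under the stated Sobolev regularity on $(\gamma,c)$.
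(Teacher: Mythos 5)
Your overall strategy coincides with the paper's: build CGO solutions $u=\gamma^{-1/2}e^{\rho\cdot x}(1+\psi_\rho)$ with $\rho\cdot\rho=0$ (the regularity hypotheses on $\gamma,c$ are indeed there to give $C^2$ control of $\psi_\rho=O(|\rho|^{-1})$), verify hypotheses (i)--(iii) of Lemma~\ref{lem:localclass} for the background coefficients, and then propagate to a neighborhood of $(\gamma Id_n,0,c)$ and to an open set of boundary data via Lemma~\ref{lem:regul}. However, the step you yourself flag as the main obstacle is exactly where your argument has a genuine gap, and your proposed fix contains an error. The constraint $\rho\cdot\rho=0$ forces $|\Re\rho|=|\Im\rho|$, so you cannot take ``the imaginary parts of the $\rho_i$ bounded (only the real parts large)''; the oscillatory and growing parts of a CGO phase are inseparable in magnitude. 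More importantly, declaring the linear independence of the projected matrices $M^m$ in \eqref{eq:Mm} to be ``generically true'' is not a proof: if all the phase differences $\zeta_j$ have comparable magnitude, the correction terms $\Theta^m_k\nabla^{\otimes2}v_k$ coming from the first $n$ solutions are of the \emph{same order} as $\nabla^{\otimes2}v_{m+n}$ itself, so the $M^m$ are not close to the rank-one tensors $\zeta_{m+n}^{\otimes2}$ and their independence (and transversality to $Id_n$) must actually be established. Note also that $\zeta_j=\rho_{j+1}-\rho_1$ is generally \emph{not} a null vector even when each $\rho_i$ is, so $\zeta_j^{\otimes2}$ is not automatically trace-free; your appeal to ``null vectors of the complexified quadratic form'' does not apply to the differences.

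The paper resolves precisely this point with a two-scale choice of phases that you are missing: the solutions generating the frame use small phases $\trho_{ij}=\eps\rho_{ij}$ and the base solution uses $\rho_1=\eps^2\rho_{12}$, while the solutions generating the matrices use the full phases $\rho_{ij}=\mk(e_i+ie_j)$ and their conjugates $\rho^*_{j,j+1}$. With this scaling the coefficients $\tilde\Theta$ are $O(\eps^{-1})$ and the correction to $M^{ij}$ is $O(\eps|\mk|^2)$, negligible against $\rho_{ij}^{\otimes2}=O(|\mk|^2)$; the limiting matrices $\rho_{ij}^{\otimes2}$ and $(\rho^*_{j,j+1})^{\otimes2}$ are explicitly computed to yield $i(e_i\otimes e_j+e_j\otimes e_i)+(e_i\otimes e_i-e_j\otimes e_j)$ and its conjugate, whose real and imaginary combinations visibly span the $M_n$-dimensional trace-free complement of $Id_n$. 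The use of \emph{both} $\rho_{ij}$ and the conjugate phases $\rho^*_{j,j+1}$ is essential to get the diagonal differences $e_i\otimes e_i-e_{i+1}\otimes e_{i+1}$ as well as the off-diagonal symmetrizations; a family of phases without conjugates would not span. Your write-up would need to supply this explicit construction (or an equivalent one) rather than cite genericity, since the independence must hold uniformly on $\bar X$ with the single fixed family of boundary data.
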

\begin{proof} 
The proof is based on the construction of complex geometrical optics solutions of the form
\begin{equation}
\label{eq:cgoisot}
   u(x;\rho) = \dfrac{1}{\sqrt{\gamma(x)}} e^{\rho\cdot x} (1+\psi_\rho(x)),
\end{equation} 
with $\rho$ a complex-valued vector such that $\rho\cdot\rho=0$.
We know that for $\gamma$ and $c$ with the aforementioned regularity and for $|\rho|$ sufficiently large, then $\psi_\rho$ is of order $|\rho|^{-1}$ in $C^2(\bar X)$ \cite{BU-IP-10}.

In the construction above Lemma \ref{lem:localclass}, we need to consider derivatives of ratios of solutions. We find that
\begin{equation}
  \label{eq:gradratios}
  \dfrac{u(x;\trho)}{u(x;\rho)}\nabla\dfrac{u(x;\rho)}{u(x;\trho)} = \rho-\trho + \varphi,\quad
  \dfrac{u(x;\trho)}{u(x;\rho)}\nabla^{\otimes2}\dfrac{u(x;\rho)}{u(x;\trho)} = (\rho-\trho)^{\otimes2} + \phi,
\end{equation}
with the vector $\varphi$ bounded independent of $(\rho,\trho)$ and matrix $\phi$ of order ${\rm max}(|\rho|,|\trho|)$ uniformly in $x\in X$.

Let us define $\rho_{ij}=\mk(e_i+ie_j)$ and define $u_{ij}=u(\cdot;\rho_{ij})$ as well as $\tilde u_{ij}=u(\cdot;\rho_{ij}^*)$. Note that $\tilde u_{ij}$ is asymptotically close to $u^*_{ij}$ as $|\mk|\to\infty$ but since $c(x)$ may be complex valued, is not necessarily equal to $u^*_{ij}$. 

We also define $\trho_{ij}=\eps\rho_{ij}$ as well as  $\rho_1=\eps^2\rho_{12}$ and $u_1=u(\cdot;\rho_1)$, with $\eps^2\mk$ sufficiently large that contributions such as $\varphi$ and $\phi$ above remain negligible for the forthcoming constructions but $\eps$ sufficiently small that $\trho=\rho_1$ or $\trho=\trho_{ij}$ in \eqref{eq:gradratios} is so small that it does not modify the independence of the matrices $M^m$ constructed below.

Let us define $v_{j}$ as follows
\begin{displaymath}
   v_1=\dfrac{u(\cdot;\trho^*_{12})}{u_1},\quad v_j=\dfrac{u(\cdot;\trho_{j-1,j})}{u_1},\quad j\geq2.
\end{displaymath}
Since all solutions $v_j$ do not vanish for $\mk$ sufficiently large, it is clear that $(\nabla v_1,\ldots, \nabla v_n)$ form a basis with $H_{ij}=\nabla v_i\cdot\nabla v_j$ a matrix with a uniformly bounded inverse for $x\in X$ (with a bound that depends on $\mk$ and $\eps$). Moreover, we find that 
\begin{displaymath}
      \dfrac{1}{v_1} \nabla v_1 \sim \eps \rho^*_{12}, 
      \,\, \dfrac{1}{v_1} \nabla^{\otimes2} v_1 \sim \eps^2 (\rho^*_{12})^{\otimes2},
      \quad \dfrac{1}{v_j} \nabla v_j \sim \eps \rho_{j-1,j},
      \,\, \dfrac{1}{v_j} \nabla^{\otimes2} v_j \sim \eps^2 \rho_{j-1,j}^{\otimes2},
      \,\, j\geq2.
\end{displaymath}
Here and below, we denote by $\sim$ equalities up to terms such as $\varphi$ and $\phi$ above that are asymptotically negligible as $|\mk|\to\infty$ as well as terms that are lower order in $\eps$.

Now for $1\leq i<j\leq n$, we define
\begin{displaymath}
    v_{ij} = \dfrac{u_{ij}}{u_1},\,\mbox{ so that } \, \dfrac{1}{v_{ij}}\nabla v_{ij}  \sim \rho_{ij}
    ,\quad\dfrac{1}{v_{ij}}\nabla^{\otimes2} v_{ij}  \sim\rho_{ij}^{\otimes 2}.
\end{displaymath}
For $1\leq i\leq n-1$, we construct
\begin{displaymath}
   \tilde v_j = \dfrac{\tilde u_{j,j+1}}{u_1},\,\mbox{ so that } 
   \,\,\dfrac{1}{\tilde v_j}\nabla \tilde v_j \sim \rho^*_{j,j+1}, \quad 
   \dfrac{1}{\tilde v_{j}}\nabla^{\otimes2} \tilde v_{j}  \sim(\rho^*_{j,j+1})^{\otimes2}.
\end{displaymath} 
Each of the vectors $\nabla v_{ij}$ and $\nabla \tilde v_j$ can uniquely be written in terms of the vectors $\nabla v_j$.  Let us define $\hat \rho_1=\rho_{12}^*$ and $\hat \rho_j=\rho_{j-1,j}$. Note that $(\hat\rho_j)_{1\leq j\leq n}$ form a basis of $\Cm^n$. Let us then introduce
\begin{displaymath}
 -\rho_{ij} = \eps \tilde\Theta^{ij}_k \hat \rho_k,\qquad  -\rho^*_{j,j+1} = \eps \tilde\Theta^{j}_k \hat\rho_k.
\end{displaymath}
Here, the summation is over the index $k$. We find that all coefficients $\tilde\Theta$ are of order $\eps^{-1}$. Then we find that
\begin{displaymath}
   -\nabla v_{ij} = \Theta^{ij}_k \nabla v_k,\,\, -\nabla \tilde v_j = \Theta^{j}_k \nabla v_k\quad\mbox{ for } \quad \Theta^{ij}_k \sim \dfrac{v_{ij}}{v_k} \tilde\Theta^{ij}_k,\qquad \Theta^{j}_k \sim \dfrac{\tilde v_{j}}{v_k} \tilde\Theta^{j}_k.
\end{displaymath}
Now for these choices, we find that 
\begin{displaymath}
   M^{ij}:=\nabla^{\otimes 2} v_{ij} + \Theta^{ij}_k \nabla^{\otimes2} v_k = v_{ij}\Big(\rho_{ij}^{\otimes2}-\tilde\Theta^{ij}_k \eps^2 \hat\rho_{k}^{\otimes2}\Big)\sim v_{ij} \rho_{ij}^{\otimes2}.
\end{displaymath}
Similarly, we have
\begin{displaymath}
   M^j :=\nabla^{\otimes 2} \tilde v_{j} + \Theta^{j}_k \nabla^{\otimes2} v_k = \tilde v_{j}\Big((\rho_{j,j+1}^*)^{\otimes2}-\tilde\Theta^{j}_k \eps^2 \hat\rho_{k}^{\otimes2}\Big)\sim \tilde v_{j} (\rho_{j,j+1}^*)^{\otimes2}.
\end{displaymath}
Note that
\begin{displaymath}
   \dfrac{\rho_{ij}^{\otimes2}}{|\mk|^2} = i(e_i\otimes e_j+e_j\otimes e_i) + (e_i\otimes e_i-e_j\otimes e_j),\quad 
   \dfrac{ (\rho^*_{ij})^{\otimes2}}{|\mk|^2}= -i(e_i\otimes e_j+e_j\otimes e_i) + (e_i\otimes e_i-e_j\otimes e_j).
\end{displaymath}
Therefore the matrices $M^{ij}$ and $M^j$ constructed above are indeed linearly independent and as in the proof of Theorem \ref{thm:identity} span a subspace of the vector space of symmetric matrices of dimension $\frac12n(n-1)+n-1=\frac12 n(n+1)-1$. The above result obtained in the limit $\eps\to0$ still holds for $\eps$ sufficiently small. Moreover, once $\eps$ is fixed, several terms of the form $\phi$ and $\varphi$ above become negligible when $|\mk|$ is sufficiently large. 
Therefore, for $|\mk|$ sufficiently large, there exists an open set of boundary conditions $(f_i)$ such that all the hypotheses of Lemma \ref{lem:localclass} are satisfied for all $x\in X$.


This proves the result for $\tilde\cf$ of the form $(\gamma(x)Id_n,0,c(x))$. Now by continuity and Lemma \ref{lem:regul}, the same boundary conditions can be used to satisfy the requirements of Lemma \ref{lem:localclass} for all $\cf$ sufficiently close to $\tilde \cf$.
\end{proof} 

\subsection{Global reconstructions in two dimensions}
\label{sec:2d}

The above reconstruction procedure has been proved to hold in the vicinity of $(Id_n,0,0)$ or $(\gamma(x)Id_n,0,c(x))$. In this section, we generalize the result to proving that global reconstructions are possible for coefficients in the vicinity of $(\gamma(x),0,c(x))$ where $\gamma$ is an arbitrary real-valued second-order elliptic tensor in dimension $n=2$ and $c(x)$ is a complex-valued potential:

\begin{theorem}
  \label{thm:twodim}
  Let $\Xf$ be the space of $\cf$ sufficiently close to $(\gamma,0,c)$ in the sense of Lemma \ref{lem:regul} with $m=0$ with $\gamma$ of class $H^{5+\epsilon}(\bar X)$ and $c$ of class $H^{3+\epsilon}(\bar X)$ for $\epsilon>0$. Let $I_2=5$. Then there exists an open set of boundary conditions $\ff=(f_i)_i$ such that $\Mf_\mf$ from $\Xf$ to $\Yf^{I_2}$ is injective. Moreover, the stability result \eqref{eq:stabgauge} holds for $X_0=X$. 
\end{theorem}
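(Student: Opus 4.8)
The plan is to reduce Theorem \ref{thm:twodim} to an application of Lemma \ref{lem:localclass} by constructing, for a fixed reference element of the form $(\gamma,0,c)$ with $\gamma$ an arbitrary real-valued elliptic $2\times 2$ tensor, a family of $I_2=5$ solutions $u_1,\dots,u_5$ whose ratios satisfy hypotheses (i)--(iii) of that lemma uniformly on $\bar X$. As in the proof of Theorem \ref{thm:isotropic}, the natural tool is complex geometrical optics (CGO) solutions, but now the anisotropy forces a preliminary change of variables. In dimension $n=2$ an elliptic tensor $\gamma$ can be reduced to a conformal multiple of the identity by an isothermal-coordinates transformation: there is a quasiconformal diffeomorphism $\Psi$ of $X$ such that, in the new coordinates $y=\Psi(x)$, the equation $\nabla\cdot\gamma\nabla u + cu=0$ becomes $\nabla_y\cdot(\tilde\gamma(y)\,\mathrm{Id}_2)\nabla_y u + \tilde c(y)u=0$ for a scalar $\tilde\gamma$ and potential $\tilde c$ inheriting (slightly reduced) Sobolev regularity from $\gamma\in H^{5+\epsilon}$ and $c\in H^{3+\epsilon}$. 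The first step, therefore, is to carry out this reduction, tracking regularity so that the CGO construction of \cite{BU-IP-10} applies to $(\tilde\gamma\,\mathrm{Id}_2,0,\tilde c)$.

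Once in isothermal coordinates, I would construct CGO solutions $u(y;\rho)=\tilde\gamma^{-1/2}e^{\rho\cdot y}(1+\psi_\rho)$ with $\rho\cdot\rho=0$, $\psi_\rho=O(|\rho|^{-1})$ in $C^2$, exactly as in the proof of Theorem \ref{thm:isotropic}. With $n=2$ we have $I_2=5$, $M_2=\tfrac12\cdot2\cdot3-1=2$, so I need $u_1$ plus two ``gradient'' solutions $v_1,v_2$ whose gradients frame $\mathbb R^2$, plus two further solutions whose Hessians (after the linear-combination bookkeeping of \eqref{eq:Thetajm}--\eqref{eq:Mm}) produce two linearly independent symmetric matrices $M^1,M^2$ not proportional to $\mathrm{Id}_2$. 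The analogues of \eqref{eq:gradratios} give, for ratios of CGO solutions, $\tfrac{1}{v}\nabla v\sim\rho-\tilde\rho$ and $\tfrac1v\nabla^{\otimes 2}v\sim(\rho-\tilde\rho)^{\otimes2}$ up to asymptotically negligible terms. Choosing, as in the isotropic proof, $\rho_{12}=\mathfrak k(e_1+ie_2)$, its conjugate $\rho_{12}^*$, and rescaled small vectors $\tilde\rho=\eps\rho_{12}$, $\rho_1=\eps^2\rho_{12}$, I take $u_1=u(\cdot;\rho_1)$, $v_1=u(\cdot;\tilde\rho_{12}^*)/u_1$, $v_2=u(\cdot;\tilde\rho_{12})/u_1$, and two more solutions built from $u(\cdot;\rho_{12})/u_1$ and $u(\cdot;\rho_{12}^*)/u_1$; the identity $\rho_{12}^{\otimes2}/|\mathfrak k|^2 = i(e_1\otimes e_2+e_2\otimes e_1)+(e_1\otimes e_1-e_2\otimes e_2)$ and its conjugate show the resulting $M^1,M^2$ are independent and orthogonal to $\mathrm{Id}_2$. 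Hence the hypotheses of Lemma \ref{lem:localclass} hold uniformly on $X$ for the reference coefficients. Then I transport the boundary conditions back through $\Psi^{-1}$ to $\partial X$; since $\Psi$ is a fixed diffeomorphism, an open set of traces on $\partial\Psi(X)$ corresponds to an open set of traces on $\partial X$, and Lemma \ref{lem:regul} with $m=0$ extends the conclusion from the reference element to all $\cf$ in a $C^{0,\alpha}$-neighborhood of $(\gamma,0,c)$, yielding injectivity of $\Mf_\mf$ on $\Xf$ together with the stability estimate \eqref{eq:stabgauge} with $X_0=X$.

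The main obstacle I anticipate is the isothermal-coordinates step and its interaction with the regularity budget. Reducing a general anisotropic $\gamma$ to conformal form in $2$D is classical (Beltrami equation / measurable Riemann mapping theorem), but the change of variables $\Psi$ is only as smooth as the Beltrami coefficient allows, so one loses roughly one derivative; the hypotheses $\gamma\in H^{5+\epsilon}$, $c\in H^{3+\epsilon}$ are presumably calibrated precisely so that after this loss the transformed $(\tilde\gamma,\tilde c)$ still lie in $H^{\frac n2+4+\epsilon}=H^{5+\epsilon}$ and $H^{\frac n2+2+\epsilon}=H^{3+\epsilon}$ respectively, which is exactly the regularity needed to invoke the CGO estimates of Theorem \ref{thm:isotropic}/\cite{BU-IP-10}. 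A secondary technical point is checking that pulling the CGO construction back to the $x$-variables does not destroy the frame condition (ii) or the independence (iii): since $\Psi$ is a diffeomorphism with $D\Psi$ invertible, $\nabla_x v = (D\Psi)^\top\nabla_y v$ and the Hessians transform by a fixed (though $x$-dependent) linear map plus first-order terms, so independence of the $M^m$ is preserved, but the first-order correction terms must be absorbed into the ``$\sim$'' error, which is why the parameters $\eps$ and $|\mathfrak k|$ are again tuned in two stages — first fixing $\eps$ small, then $|\mathfrak k|$ large.
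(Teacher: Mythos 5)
Your proposal is correct and follows essentially the same route as the paper: the paper likewise reduces the two-dimensional anisotropic problem via a quasiconformal map to conformal form and builds CGO solutions $u_\rho(x)=\sqrt{J(x)}\,e^{\rho\cdot\varphi(x)}(1+\psi_\rho(x))$, the only cosmetic difference being that it keeps the curved phase $\rho\cdot\varphi(x)$ in the original coordinates (so that the frame and the independence of the matrices $M^m$ are read off from $\nabla\varphi_1,\nabla\varphi_2$) rather than changing variables globally and pulling back as you do. Your concern about the regularity budget is resolved in the paper by first taking $\gamma$ smooth and then invoking Lemma \ref{lem:regul} with $m=0$ to extend to coefficients with the stated Sobolev regularity, so it does not constitute a gap.
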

\begin{proof}
To simplify the notation, we set $c\equiv0$ and leave the details to the reader to consider the case $c\not=0$ as was done in the proof of Theorem \ref{thm:isotropic}. 

   We prove that we can apply Lemma \ref{lem:localclass} for $X_0=X$ and $(a,b,c)=(\gamma,0,0)$ for an open set of boundary conditions $\ff$. Then by continuity, the hypotheses of Lemma \ref{lem:localclass} still hold for $\cf$ sufficiently close to $(\gamma,0,0)$. As we did in the proof of the preceding theorem, we can assume that $\gamma$ is smooth since by an application of Lemma \ref{lem:regul}, the result can be extended to any $\gamma$ satisfying the regularity hypotheses of Lemma \ref{lem:regul} with $m=0$.
   
   This global reconstruction works only in two dimensions of space and for real valued tensors $\gamma$. The reason is that global complex geometrical optics solutions can be constructed in two dimensions by means of appropriate quasiconformal maps. Such results do not hold in general in dimension $n\geq3$.
   
Let $G_0$ be the identity conformal structure and $G=G(z)$ the conformal structure given by $\gamma$. Then there is a diffeomorphism $\phi$ from $\Cm\to\Cm$, unique after normalization at infinity,  such that \cite{AIM-PUP-08}
\begin{displaymath}
   D\phi^t(z) J^{-1}(z,\phi(z))  D\phi(z) = G(z).
\end{displaymath}
Then with $\phi^*u=u\circ \phi$ we have that
\begin{displaymath}
\nabla\cdot J^{-1} \nabla \phi^* u = \nabla\cdot G\nabla u \circ \phi.
\end{displaymath}
Identifying $z=x_1+ix_2$ and $x=(x_1,x_2)$ and taking $\phi^*u(x) = \sqrt J(\phi(x)) e^{\rho\cdot x}$, we find that $u(x) = \sqrt J(x) e^{\rho \cdot \varphi(x)}$ with $\varphi(x)=(\varphi_1(x),\varphi_2(x))$ a diffeomorphism with $\varphi^{-1}=\phi$. In other words, we construct CGO solutions for anisotropic media of the form
\begin{displaymath}
   u_\rho(x) = \sqrt J(x) e^{\rho\cdot \varphi(x)} (1+\psi_\rho(x)).
\end{displaymath}
Let $\rho_1=k(ie_1+e_2)$ and $\rho_2=k(ie_2-e_1)$. Let $u_j=u_{\rho_j}$, $j=1,2$. As in the preceding section, we also define $\tilde u_j=u_{\rho^*_j}$ for $j=1,2$. Then we find 
\begin{displaymath}
k^{-1}\nabla u_1 = (i\nabla \varphi_1+\nabla\varphi_2) u_1 +\zeta_1,\qquad
k^{-1}\nabla u_2 = i (i\nabla \varphi_1+\nabla\varphi_2) u_2 +\zeta_2.
\end{displaymath}
We thus find
\begin{displaymath}
   k^{-1}u_2^{-1}\nabla u_2 = i k^{-1}u_1^{-1}\nabla u_1  + o(1) = -\nabla\varphi_1+i\nabla\varphi_2 + o(1).
\end{displaymath}
Here, we are decomposing $\nabla u_2$ over $\nabla u_1$ and $\nabla \tilde u_1$, which form a basis for $k$ sufficiently large. 
Then
\begin{displaymath}\begin{array}{rcl}
   M &=&  i k^{-2}u_1^{-1}\nabla\otimes\nabla u_1 - k^{-2}u_2^{-1}\nabla\otimes\nabla u_2 \\ &\sim &(i+1) \big(\nabla\varphi_2^{\otimes 2}-\nabla\varphi_1^{\otimes 2}\big) + (i-1) \big(\nabla\varphi_1\otimes\nabla\varphi_2+\nabla\varphi_2\otimes\nabla\varphi_1\big),
   \end{array}
\end{displaymath}
in the limit $k\to\infty$. In the same way that we have decomposed $\nabla u_2$ over $\nabla u_1$ and $\nabla \tilde u_1$ above, we can decompose $\nabla \tilde u_2$  over $\nabla u_1$ and $\nabla \tilde u_1$ as well. In the limit $k\to\infty$, the matrix $M^*$ will thus be given by the complex conjugation of the above matrix. This proves that by in the limit $k\to\infty$, the matrices $M^m$ that we construct are given by the real and imaginary parts of $M$:
\begin{displaymath}
 M_{\pm} = \big(\nabla\varphi_2^{\otimes 2}-\nabla\varphi_1^{\otimes 2}\big) \pm \big(\nabla\varphi_1\otimes\nabla\varphi_2+\nabla\varphi_2\otimes\nabla\varphi_1\big).
\end{displaymath}
After change of coordinates, we obtain the two matrices:
\begin{displaymath}
  M_1 = \nabla\varphi_2^{\otimes 2}-\nabla\varphi_1^{\otimes 2},\qquad M_2 = \nabla\varphi_1\otimes\nabla\varphi_2+\nabla\varphi_2\otimes\nabla\varphi_1
\end{displaymath}
which we want to be non trivial and linearly independent. The above matrices $M_{1,2}$ are those obtained in the limit $k\to\infty$. This means that for $k$ sufficiently large, the two constructed matrices $M_{1,2}$ from $u_1$ and $u_2$ will be close to their limits and hence satisfy the same properties of linear independence. 

Now we observe that $a=\nabla \varphi_1$ and $b=\nabla\varphi_2$ are linearly independent since $\varphi$ is a diffeomorphism. And $a\otimes a$, $b\otimes b$, $a\otimes b+b\otimes a$ are basis elements for symmetric matrices. Thus $M_1$ has coordinates $(-1,1,0)$ while $M_2$ has coordinates $(0,0,1)$ in that basis. As a consequence, both matrices $M_1$ and $M_2$ are linearly independent, in the limit $k\to\infty$ as well as for $k$ sufficiently large. Note that the independence is uniform in $x\in X$ for $k$ sufficiently large.

This shows that the hypotheses of Lemma \ref{lem:localclass} are satisfied for $X_0=X$. Such a calculation holds for any set of coefficients close to $(\gamma,0,0)$. A very similar proof applies to $\cf$ in the vicinity of $(\gamma,0,c)$ as stated in the theorem. This proves the theorem.
\end{proof}

\subsection{Global reconstructions with redundant measurements}
\label{sec:nd}

 In this section, we show that reconstructions are possible for essentially arbitrary (sufficiently smooth) coefficients $\cf$. However, the construction of the matrices $M^m$ becomes local. We thus need to use a number of internal functionals $I$ that is potentially much larger than $I_n$, although we do not expect this large number of coefficients to be necessary in practical inversions.

The local constructions require that certain properties of linear independence be satisfied. Such conditions will be satisfied for well-chosen illuminations $f_j$ on the boundary $\partial X$. The control of the linear independence from the boundary is obtained by means of a Runge approximation; see Lemma \ref{lem:runge} below. This step requires that the operator $L=\nabla\cdot a\nabla + b\cdot\nabla+c$ satisfy a unique continuation principle, which we state as follows:
\begin{property}[Unique Continuation]\label{property:UCP}
 We say that $L$ satisfies the unique continuation principle when $Lu=0$ on $X\backslash X_0$ with $u=0$ on $\partial X$ and $n\cdot\nabla u=0$ on $\partial X$ implies that $u=0$ on $X\backslash X_0$, where $X_0$ is an arbitrary sufficiently smooth open domain $X_0\subset\subset X$. 
\end{property} 
For unique continuation results, we refer the reader to \cite{C-AJM-58,N-CPAM-57} and the theoretical results we shall use here \cite[Theorem 17.2.1]{H-III-SP-94}. The latter result states that $L$ satisfies the unique continuation principle \ref{property:UCP} when the principal symbol of $L$ given by $p(x,\xi)=a(x)\xi\cdot\xi$ is such that:
\begin{quote}
  (i) $a(x)$ is Lipschitz continuous, \\ [1mm]
   (ii) For $\xi,N \in\Rm^n\backslash \{0\}$, the quadratic equation $p(x,\xi+\tau N)=0$  in the variable $\tau\in\Cm$ admits a double root $\tau$ if and only if $\xi+\tau N=0$. 
\end{quote}
Then we have the following lemma:
\begin{lemma}\label{lem:UCP}
  Let $p(x,\xi)=a(x)\xi\cdot \xi$ be the principal symbol of $L$, which we assume is elliptic. 
  \\ In dimension $n\geq3$, the quadratic equation $p(x,\xi+\tau N)=0$ for $\xi,N \in\Rm^n\backslash \{0\}$ never admits a double root $\tau$ unless $\xi+\tau N=0$. \\
  In dimension $n=2$, the same result holds when in addition \eqref{eq:ella} is satisfied.\\
  In all these cases, $L$ thus satisfies Property \ref{property:UCP} when $a$ is Lipschitz continuous.
\end{lemma}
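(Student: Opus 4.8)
The plan is to reduce the claimed unique continuation property to the sufficient condition quoted from \cite[Theorem 17.2.1]{H-III-SP-94}, namely that the principal symbol $p(x,\xi)=a(x)\xi\cdot\xi$ has no non-degenerate double characteristics: for $\xi,N\in\Rm^n\setminus\{0\}$, the quadratic polynomial $\tau\mapsto p(x,\xi+\tau N)$ in $\tau\in\Cm$ has a double root only when $\xi+\tau N=0$. Since the Lipschitz hypothesis on $a$ is assumed, everything comes down to the algebraic statement about the symbol, which I would prove by a direct computation. Fix $x$ and write $A=a(x)$ (a symmetric matrix with $\Re A$ positive definite). Expand
\begin{displaymath}
  p(x,\xi+\tau N) = (AN\cdot N)\,\tau^2 + 2(AN\cdot \xi)\,\tau + (A\xi\cdot\xi).
\end{displaymath}
Because $\Re A$ is positive definite and $N\neq 0$, one has $\Re(AN\cdot N)\geq\alpha_0|N|^2>0$, so the leading coefficient is nonzero and this is a genuine quadratic with discriminant
\begin{displaymath}
  \Delta = 4\big[(AN\cdot\xi)^2 - (AN\cdot N)(A\xi\cdot\xi)\big].
\end{displaymath}
A double root occurs precisely when $\Delta=0$, i.e. $(AN\cdot\xi)^2=(AN\cdot N)(A\xi\cdot\xi)$, and the double root is then $\tau_0=-(AN\cdot\xi)/(AN\cdot N)$. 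The goal is to show $\Delta=0$ forces $\xi+\tau_0 N=0$.

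For the dimension $n\geq 3$ case I would argue as follows. Suppose $\Delta=0$ but $\xi+\tau_0 N\neq 0$; set $\eta=\xi+\tau_0 N$, so $p(x,\eta)=(A\eta\cdot\eta)=0$ since $\tau_0$ is a root. Thus the ellipticity of $L$ (no real nonzero characteristics, i.e. $A\zeta\cdot\zeta\neq 0$ for real $\zeta\neq 0$) is not yet enough because $\eta$ may be complex; here I need the finer structure. Consider the real two-plane $V=\mathrm{span}_{\Rm}\{\xi,N\}$ (which is genuinely two-dimensional, else $\xi,N$ parallel and $\xi+\tau_0 N$ real with $A\eta\cdot\eta=0$ contradicting ellipticity). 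Restricting the quadratic form $\zeta\mapsto A\zeta\cdot\zeta$ to $V_\Cm=\mathrm{span}_\Cm\{\xi,N\}$, the condition $\Delta=0$ says exactly that this binary complex quadratic form is a perfect square, i.e. $A\zeta\cdot\zeta=c\,\ell(\zeta)^2$ for some nonzero $c\in\Cm$ and linear functional $\ell$ on $V_\Cm$. Then $A\zeta\cdot\zeta=0$ for every $\zeta$ in the complex line $\ker\ell\subset V_\Cm$. I would then show that a complex line on which the form $A\cdot$ vanishes identically must, by ellipticity of $A$ (positive-definite real part), consist only of vectors of the form $t(\eta_1+i\eta_2)$ with $\eta_1,\eta_2$ real, $A\eta_1\cdot\eta_1=A\eta_2\cdot\eta_2$, $A\eta_1\cdot\eta_2=0$ — in particular $\Re A$ restricted to $\mathrm{span}_\Rm\{\eta_1,\eta_2\}$ is proportional to the identity in some basis, and crucially $\eta_1,\eta_2$ are orthogonal for $\Re A$ with equal $\Re A$-norm. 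The point for $n\geq 3$: combined with the fact that $\ker\ell$ lives inside the specific two-plane $V_\Cm$ coming from $\xi,N$, one checks directly that this forces the vanishing vector to be a multiple of $N$ itself and hence $\eta=\xi+\tau_0 N$ is a multiple of $N$, which is real and nonzero, again contradicting $A\eta\cdot\eta=0$. (Concretely one can also just say: $\Delta=0$ with $\eta:=\xi+\tau_0N\neq 0$ gives $A\eta\cdot\eta=0$ and $AN\cdot\eta=0$, hence $A(\eta)\cdot(\eta)=0$ and $A(\eta)\cdot(sN)=0$ for all real $s$, so writing $\eta=\eta_1+i\eta_2$ and separating real/imaginary parts produces $A\eta_1\cdot\eta_1=A\eta_2\cdot\eta_2$, $A\eta_1\cdot\eta_2=0$, $AN\cdot\eta_1=AN\cdot\eta_2=0$; since $\eta\in\mathrm{span}_\Cm\{\xi,N\}$ and $AN\cdot\eta=0$, in dimension $\geq 3$ this overdetermines $\eta$ to be $0$.) I will spell out the elementary linear algebra; it is the one genuinely nonroutine step.

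For $n=2$ the symbol is a binary quadratic form to begin with, so $\Delta=0$ says $A\zeta\cdot\zeta$ is a perfect square on all of $\Cm^2$, and the above analysis would allow a nonzero complex isotropic direction unless prevented by an extra hypothesis — this is exactly why the ellipticity bound \eqref{eq:ella} (two-sided, $\alpha_0|\xi|^2\leq \xi\cdot\Re a\,\xi\leq\alpha_0^{-1}|\xi|^2$) is imposed here. The hard part, and the main obstacle, is precisely this $n=2$ case: I need to show that when $\Re a$ satisfies \eqref{eq:ella} the form $\tau\mapsto p(x,\xi+\tau N)$ cannot have a double root with $\xi+\tau N\neq 0$. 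The mechanism is that a double root corresponds to $A$ being, up to complex scaling and change of real basis on the plane, of the form $\mathrm{diag}(1,1)+i(\text{symmetric})$ degenerating so that $A$ has a repeated complex "null direction"; one then checks that the two-sided bound on $\Re a$ forbids the imaginary part from being large enough to create the coincidence of roots unless $\xi+\tau N$ vanishes. I would make this quantitative: parametrize $A=S+iT$ with $S=\Re a$ positive definite and $T$ symmetric real, compute $\Delta$ in terms of $S,T,\xi,N$, split into real and imaginary parts of the equation $\Delta=0$, and show the resulting system has no solution with $\xi+\tau_0 N\neq 0$ under \eqref{eq:ella}. Once the symbol condition is established in both cases, the final sentence of the lemma — that $L$ satisfies Property~\ref{property:UCP} when $a$ is Lipschitz — follows immediately by invoking \cite[Theorem 17.2.1]{H-III-SP-94} with hypotheses (i) (Lipschitz $a$) and (ii) (the symbol condition just proved), exactly as quoted in the paragraph preceding the lemma.
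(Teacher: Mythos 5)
Your reduction to the symbol condition of \cite[Theorem 17.2.1]{H-III-SP-94}, the computation of the discriminant $\Delta=4[(AN\cdot\xi)^2-(AN\cdot N)(A\xi\cdot\xi)]$, and the identification ``double root $\Leftrightarrow$ the restriction of $A$ to ${\rm span}_{\Cm}\{\xi,N\}$ is a perfect square $c\,\ell(\zeta)^2$'' are all correct and match the intended structure. But neither branch is actually closed, and the one computation you do offer is flawed. In the $n\geq 3$ parenthetical you ``separate real and imaginary parts'' of $A\eta\cdot\eta=0$ and $AN\cdot\eta=0$ to obtain $A\eta_1\cdot\eta_1=A\eta_2\cdot\eta_2$, $A\eta_1\cdot\eta_2=0$, $AN\cdot\eta_1=AN\cdot\eta_2=0$; this is only valid when $A$ is real, whereas the whole point of the lemma is that $a$ is complex-valued (here $A\eta\cdot\eta=A\eta_1\cdot\eta_1-A\eta_2\cdot\eta_2+2iA\eta_1\cdot\eta_2$ with each term itself complex, so no such separation is available). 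The subsequent claim that dimension $\geq3$ ``overdetermines $\eta$ to be $0$'' is asserted, not proved, and the dimension plays no visible role in the equations you wrote. For $n=2$ you explicitly flag the key step as ``the hard part'' and replace it with a plan (write $A=S+iT$, split $\Delta=0$ into real and imaginary parts, show no solution), which is never carried out. So as it stands no branch of the lemma is established.

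The paper closes both gaps cheaply. For $n=2$ --- and in fact whenever $\Re a$ is positive definite, i.e.\ under \eqref{eq:ella} --- it observes that if the form is a perfect square $(l\cdot\zeta)^2$ with $l=l_r+il_i$, then for real $\zeta$ its real part equals $(l_r\cdot\zeta)^2-(l_i\cdot\zeta)^2$, an indefinite or degenerate real quadratic form on a two-dimensional space, contradicting $\zeta\cdot\Re a\,\zeta\geq\alpha_0|\zeta|^2$. This one-line observation is exactly the ``quantitative'' step you postponed, and it would also repair your $n\geq3$ argument under \eqref{eq:ella}. For $n\geq3$ under mere ellipticity (no positivity of $\Re a$, which is what the lemma actually asserts in that case), the paper instead invokes \cite[Lemma 17.2.5]{H-III-SP-94}: by a connectedness argument the quadratic $\tau\mapsto p(x,\xi+\tau N)$ has exactly one root in each half-plane when $\xi,N$ are linearly independent, hence no double root. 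You should either cite that lemma for $n\geq 3$ or explicitly weaken your claim to symbols satisfying \eqref{eq:ella}, which is all that is needed in Theorem \ref{thm:global}.
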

\begin{proof}
   The proof is essentially given in  \cite[Lemma 17.2.5]{H-III-SP-94}. In dimension $n\geq3$, the equation $p(x,\xi+\tau N)$ has one root with $\Im\tau>0$ and one root with $\Re\tau>0$. In dimension $n=2$, the equation $p(x,\xi+\tau N)$ has a double root at a fixed point $x\in X$ if and only if we have $p(x,\xi)=(l(x)\cdot\xi)^2$ for some complex-valued vector $l=l_r+il_i\in\Cm^n$ (with $l_r=\Re l$ and $l_i=\Im l$) to preserve ellipticity (note that such quadratic forms cannot be elliptic in dimension $n\geq3$). But then $p(x,\xi)=(l_r\cdot\xi)^2-(l_i\cdot\xi)^2 + i l_r\cdot \xi l_i\cdot\xi$ so that the real part of $a$ is not elliptic. This proves the lemma.
\end{proof}

With this result, we can now state the main theorem of the paper.
\begin{theorem}
  \label{thm:global}
  Let $\Xf$ be the space of coefficients $\cf$ such that $(b,c,\nabla\cdot a)$ are of class $C^{0,\alpha}(X)$, $a$ is of class  $C^{0,1}(X)$, and such that \eqref{eq:ella} holds. Then there exists $I\geq I_n$ and an open set (for the topology of $C^{2,\alpha}(\partial X)$) of boundary conditions $\ff=(f_i)_{1\leq i\leq I}$ such that $\Mf_\mf$ from $\Xf$ to $\Yf^{I}$ is injective. Moreover, the stability result \eqref{eq:stabgauge} holds for $X_0=X$. 
\end{theorem}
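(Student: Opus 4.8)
The plan is to reduce this global statement to the purely local reconstruction of Lemma~\ref{lem:localclass}, applied on a finite cover of $\bar X$ by small balls, and to realize on each ball the $I_n$ internal functionals required by that lemma as restrictions of genuine $H^1(X)$ solutions via a Runge approximation argument built on the unique continuation principle of Lemma~\ref{lem:UCP}. One fixes the coefficients $\cf\in\Xf$ to be reconstructed and then produces $I$ and $\ff$, the constants entering \eqref{eq:stabgauge} being uniform over bounded subsets of $\Xf$.

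First I would produce local solutions near an arbitrary $x_0\in X$. On $B(x_0,r)\subset\subset X$ set $y=(x-x_0)/r$; then $u(x)=U(y)$ solves, on the unit ball $B_1$,
\begin{displaymath}
\nabla_y\cdot a(x_0+ry)\nabla_y U + r\,b(x_0+ry)\cdot\nabla_y U + r^2 c(x_0+ry)\,U=0 .
\end{displaymath}
Since $a\in C^{0,1}(X)$ and $b,c,\nabla\cdot a\in C^{0,\alpha}(X)$, the rescaled coefficients $\bigl(a(x_0+r\cdot),\,r\,b(x_0+r\cdot),\,r^2 c(x_0+r\cdot)\bigr)$, together with the divergence $r\,(\nabla\cdot a)(x_0+r\cdot)$ of the rescaled diffusion tensor, converge as $r\to0$ to $(a(x_0),0,0)$ in the $C^{0,\alpha}(B_1)$ topology — the powers of $r$ absorbing the zeroth order contributions and the Lipschitz bound controlling the oscillation of $a$ — and this convergence is uniform in $x_0$ and over bounded subsets of $\Xf$, with $a(x_0)$ ranging over a compact set of tensors satisfying \eqref{eq:ella}. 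Hence, for $r$ small enough, the polynomial construction used in the proofs of Theorems~\ref{thm:identity} and~\ref{thm:globalconst} applies on $B_1$ with $a_0=a(x_0)$: using as boundary data on $\partial B_1$ the traces of the polynomials $1$, $x_j$, $\tfrac12 Q_m x\cdot x$, one gets $I_n$ solutions on $B_1$ satisfying hypotheses (i)--(iii) of Lemma~\ref{lem:localclass}. Undoing the scaling yields $I_n$ local solutions of $Lu=0$ on $B(x_0,r(x_0))$ for which Lemma~\ref{lem:localclass} holds there, with $r(x_0)>0$ bounded below uniformly on bounded subsets of $\Xf$.

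Next I would globalize. By Lemma~\ref{lem:UCP}, $L$ satisfies Property~\ref{property:UCP}, hence the Runge approximation property of Lemma~\ref{lem:runge}: since $X\setminus\overline{B(x_0,r(x_0))}$ is connected for $r(x_0)$ small, restrictions to $B(x_0,r(x_0))$ of solutions of $Lu=0$ on $X$ with $C^\infty$ boundary data are dense, in $L^2(B(x_0,r(x_0)))$ and therefore — by interior Schauder estimates, valid since $a$, $\nabla\cdot a+b$, $c\in C^{0,\alpha}$ — in $C^2(\overline{B(x_0,r(x_0)/2)})$. As hypotheses (i)--(iii) of Lemma~\ref{lem:localclass} are open for the $C^2$ topology of the solutions, each local solution can be replaced by a global solution with boundary data in $C^\infty(\partial X)\subset C^{2,\alpha}(\partial X)$, still verifying Lemma~\ref{lem:localclass} on $B(x_0,r(x_0)/2)$; near $\partial X$ one argues in the same way, working in $B(x_0,r(x_0))\cap X$ and invoking boundary Schauder estimates, the prescribed data $f_i\in C^{2,\alpha}(\partial X)$ supplying the boundary regularity. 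By compactness, finitely many half-balls $B(x_k,r(x_k)/2)$, $k=1,\dots,K$, cover $\bar X$; set $I=KI_n$ and let $\ff$ concatenate the $I_n$ boundary conditions attached to each $x_k$. If $\tilde\cf\in\Xf$ satisfies $\Mf_\mf(\tilde\cf)=\Mf_\mf(\cf)$ then the corresponding solutions coincide, so on each $B(x_k,r(x_k)/2)$ the selected $I_n$ of them satisfy Lemma~\ref{lem:localclass} for $\tilde\cf$ as well, forcing $\tilde\cf=\cf$ there and hence on $X$; applying the quantitative part of Lemma~\ref{lem:localclass} on each $B(x_k,r(x_k)/2)$ and maximizing over $k$ gives \eqref{eq:stabgauge} with $X_0=X$. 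Because all conditions used are finitely many open conditions on $C^2$ norms of solutions, which depend continuously on the boundary data by Lemma~\ref{lem:regul} with $m=0$, the conclusion persists for every $\ff'$ in a $C^{2,\alpha}(\partial X)$ neighborhood of $\ff$.

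I expect the main obstacle to be the globalization step: it is exactly here that unique continuation for possibly complex-valued, merely Lipschitz, second-order elliptic operators is needed, which is why Lemma~\ref{lem:UCP} and the ellipticity \eqref{eq:ella} — ruling out, in $n=2$, the degenerate principal symbols that would destroy unique continuation — are indispensable. One must also check that the Runge approximants, a priori only abstract $H^1(X)$ solutions, can be chosen with smooth boundary traces and close in $C^2$ on a slightly smaller ball (hence the recourse to interior, and near $\partial X$ boundary, Schauder estimates), and that the admissible radii $r(x_0)$ and the number $K$ of balls are uniform over bounded subsets of $\Xf$, so that the stability constant in \eqref{eq:stabgauge} is too; the explicit family $\ff$, being built from Runge approximants of the fixed $\cf$, still depends on $\cf$.
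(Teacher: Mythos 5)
Your proposal is correct and reproduces the architecture of the paper's proof: a local construction of $I_n$ solutions verifying hypotheses (i)--(iii) of Lemma \ref{lem:localclass} on a small ball around each point, promotion of these local solutions to restrictions of global solutions by the Runge approximation of Lemma \ref{lem:runge} (resting on the unique continuation property supplied by Lemma \ref{lem:UCP}), and a finite cover of $\bar X$ giving $I=KI_n$, injectivity, and \eqref{eq:stabgauge} by maximizing the local estimates. The one genuinely different ingredient is the local step. The paper freezes the coefficients at $x_0$, retains the lower-order terms $b_0=b(x_0)+\nabla\cdot a(x_0)$ and $c_0=c(x_0)$, and builds explicit quadratic polynomials $\fp=\frac12Qx\cdot x+\rho\cdot x+d$ subject to the pointwise constraint $a_0:Q+b_0\cdot\rho+c_0d=0$, which are then corrected to exact local solutions by two successive $O(r_0^\alpha)$ perturbation arguments in $C^{2,\alpha}$. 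You instead rescale $x=x_0+ry$ so that the lower-order coefficients are damped by powers of $r$ and the rescaled problem converges in $C^{0,\alpha}(B_1)$ to the constant model $(a(x_0),0,0)$, to which Theorem \ref{thm:globalconst} applies as a black box; undoing the scaling preserves hypotheses (i)--(iii) since the frame and independence conditions are scale-covariant. Your route is slightly more economical and makes the uniformity of the admissible radius $r(x_0)$ over $\bar X$ and over bounded subsets of $\Xf$ transparent; the paper's explicit choice of $(Q,\rho,d)$ avoids rescaling altogether and handles the lower-order terms exactly at $x_0$. One point that both you and the paper leave at the same level of informality is the treatment of balls meeting $\partial X$: Lemma \ref{lem:runge} is stated for $X_0\subset\subset X$, so near the boundary one must either prescribe the data directly on $\partial X\cap B(x_0,r_0)$ with boundary Schauder estimates (as you sketch) or shrink to interior balls; your version is in fact the more explicit of the two.
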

\begin{proof} We decompose the proof into three steps: we first construct local solutions assuming that the coefficients are constant. We then extend the local constructions to the case of non-constant coefficients. We finally apply the Runge approximation to obtain an open set of boundary conditions such that the hypotheses of Lemma \ref{lem:localclass} are satisfied locally. Local constructions are then patched together to provide global stable and unique reconstructions.
\paragraph{\bf Problem with constant coefficients.}
Let first $x_0$ be a point inside $X$, which by change of coordinates we call $0$. 
Let us define $(a_0,b_0,c_0)=(a(0),b(0)+\nabla\cdot a(0),c(0))$. We then look for solutions of the constant coefficient equation
\begin{equation}\label{eq:fu}
 L_0 \fu :=  a_0: \nabla^{\otimes 2} \fu + b_0\cdot\nabla \fu + c_0 \fu =0.
\end{equation}
We look for solutions approximately of the form
\begin{displaymath}
  \fp = \dfrac12 Qx\cdot x + \rho\cdot x + d, \qquad \nabla \fp = Qx+\rho,\qquad \nabla^{\otimes 2} \fp = Q.
\end{displaymath}
In order for $\fp$ to satisfy the equation at $x=0$, we need to find $(Q,\rho,d)$ such that
\begin{displaymath}
   a_0 : Q + b_0 \cdot \rho + c_0 d =0.
\end{displaymath}
We construct $I_n = 1+n+\frac12 n(n+1)-1$ such solutions below. We then realize that $L_0\fp=O(x)$. Let $r_0$ be sufficiently small and let us define
\begin{equation}
\label{eq:fux0}
   L_0 \fu =0 \quad\mbox{ in } \quad B(0,r_0), \qquad \fu=\fp\quad \mbox{ on } \partial B(0,r_0).
\end{equation}
For $r_0$ sufficiently small, the derivatives up to order two of $\fu$ and $\fp$ are very close. The linear independence of the structures constructed below with the polynomials $\fp$ at $x=0$ therefore still holds for the corresponding structures constructed with the elliptic solutions $\fu$ in \eqref{eq:fux0}.

We call the first solution $\fu_0$ obtained by defining
\begin{displaymath}
   d =1 ,\qquad \rho =0,\qquad Q = -\frac{d a_0^*}{a_0:a_0^*}.
\end{displaymath}
Note that $\fu_0$ does not vanish in a sufficiently small neighborhood of $0$ (and can be normalized so that $\fu_0(0)=1$). We next define the solutions $\fu_j$ for $1\leq j\leq n$. The vector $b_0=b_{0r}+ib_{0i}$ is after a rotation if necessary in the span of $e_1$ and $e_2$. We thus write $b_0=\mu e_1+\nu e_2$ for $\mu$ and $\nu$ in $\Cm$. For $j=1,2$, we define
\begin{displaymath}
   d_j=0,\qquad \rho_j= e_j,\qquad  Q_1 =  -\dfrac{\mu a_0^*}{a_0:a_0^*}, \qquad  Q_2 =  -\dfrac{\nu a_0^*}{a_0:a_0^*}.
\end{displaymath}
For $j\geq3$, we define
\begin{displaymath}
  d =0,\qquad \rho = e_j,\qquad Q =0.
\end{displaymath}
The solutions $\fu_j$ are therefore constructed such that $\nabla \fp_j=e_j$ at $x=0$. Moreover, we find that $\nabla \frac{\fp_j}{\fp_0}=e_j$ at $x=0$ as well since $\nabla \fp_0=0$ at $x=0$. We thus obtain that $\nabla  \frac{\fu_j}{\fu_0}$ form a basis of $\Rm^n$ in a sufficiently small neighborhood of $0$. 

Finally, for $n+1\leq j\leq n+\frac12 n(n+1)-1$, we define $d=0$ and $\rho=0$ 
and choose the matrices $Q_j$ such that they form a free family of symmetric matrices that are orthogonal to $a_0^*$, the complex conjugate of $a_0$. This free family has dimension $\frac12 n(n+1)-1$. This implies that $(a_0:Q_j)=0$. 

In the construction above Lemma \ref{lem:localclass}, it is $\nabla^{\otimes 2}v_{j+m}$ for $v_{j}=\fu_0^{-1}\fu_j$ that is used  to form a free family of dimension $\frac12 n(n+1)-1$. We verify that 
\begin{displaymath}
  \nabla ^{\otimes 2} \frac{\fu_{j+m}}{\fu_0} = \dfrac{1}{\fu_0} \nabla^{\otimes 2} \fu_{j+m} + \nabla \fu_{j+m} \otimes \nabla \frac1{\fu_0} + \nabla \dfrac1{\fu_0} \otimes\nabla \fu_{j+m} + \fu_{j+m} \nabla^{\otimes 2} \frac1{\fu_0}.
\end{displaymath}
We verify that both sides equal $Q_j$ at $x=0$ when $\fu_j$ is replaced by $\fp_j$.

Let now $\theta_j^m$ and $M^m$ be defined as above Lemma \ref{lem:localclass}. We verify that $M^m$ is close to $Q_{j+m}$ at $x=0$. By continuity, the matrices $M^m$ are therefore linearly independent in a ball $X_0=B(0,r_0)$ for $r_0>0$ sufficiently small. This shows that on that ball, the family of matrices $M^m$ as constructed above Lemma \ref{lem:localclass} satisfy the hypotheses of that lemma. All other hypotheses of that Lemma are therefore satisfied for the family $\fu_j$.

\paragraph{\bf Problem with non-constant coefficients.} We now return to the full problem and look for solutions of the form:
\begin{equation}\label{eq:localu}
 a(x): \nabla ^{\otimes 2} u + (b+\nabla\cdot a) \cdot\nabla u + c u =0 \quad\mbox{ in } X_0,\qquad u = \fu \quad \mbox{ on } \partial X_0,
\end{equation}
where $X_0=B(0,r_0)$ is a ball whose radius $r_0$ is equal to or smaller than the value chosen in the construction of $\fu$. Let $w=\fu - u$. We find
\begin{displaymath}
 a_0: \nabla ^{\otimes 2} w + b_0 \cdot\nabla w + c_0 w = (a-a_0): \nabla^{\otimes 2}u + (b+\nabla\cdot a - b_0)\cdot \nabla u + (c-c_0) u
\end{displaymath}
on $X_0$ with $w=0$ on $\partial X_0$. 
By assumption on the coefficients and $u$, the above right-hand side is bounded uniformly by $r_0^\alpha$ on the ball $X_0$.  We deduce from elliptic regularity results for complex-valued coefficients \cite[Chapter 6]{M-SP-66} that 
\begin{displaymath}
  \|w\|_{C^{2,\alpha}(X_0)} \leq Cr_0^\alpha,
\end{displaymath}
for some positive constant $C$. Thus for $r_0$ sufficiently small, we find that the functions $u_j$ defined as solutions for \eqref{eq:localu} with boundary conditions $\fu_j$ are arbitrarily close to $\fu_j$ in the $C^2$ sense for $r_0$ sufficiently small. This proves that $u_0$ remains non-vanishing and close to $1$ on $X_0$, that $\nabla\frac{u_j}{u_0}$ for $1\leq j\leq n$ remain linearly independent, and that the matrices $M^m$ constructed above Lemma \ref{lem:localclass} satisfy the independence properties stated in that lemma.

\paragraph{\bf Continuation to the boundary.} 
So far, we have constructed solutions $u$ that are defined on $X_0=B(0,r_0)$. We need to construct solutions on the whole domain $X$ such that their restrictions on $X_\mu=B(0,\mu r_0)$ is a sufficiently accurate approximation of $u$ for $0<\mu<1$. We need the following Runge approximation property, following \cite{NUW-JPAM-05}; see also \cite{L-CPAM-56}.
\begin{lemma}[Runge approximation]\label{lem:runge}
   Let $L$ be an operator satisfying the unique continuation property of Cauchy data on $X$ as described above.
   
   Let $u_0$ be a solution of $Lu_0=0$ on $X_0$ and let $X_\mu=B(0,\mu r_0)$ for $0<\mu<1$. Then for each $\eps>0$, there is a function $f_\eps\in H^{\frac12}(\partial X)$ such that the solution of $Lu_\eps=0$ on $X$ with $u_\eps=f_\eps$ on $\partial X$ is such that 
\begin{equation}\label{eq:receps}
     \|u_\eps-u_0\|_{C^{2,\alpha}(X_\mu)} \leq \eps.
\end{equation}
\end{lemma}
\begin{proof}[Runge Lemma].
     Let $E=\{u\in H^1(X_0),\, Lu=0 \mbox{ in } X_0\}$ and $F=\{u_{|X_0}, \,\, u\in H^1(X),\, Lu=0 \mbox{ in } X\}$ be linear subspaces of $L^2(X_0)$. We wish to prove that $\bar F=E$ for the strong $L^2$ topology. By Hahn Banach, this means that for all $f\in L^2(X_0)$, then $(f,u)=0$ for all $u\in F$ implies that $(f,u)=0$ for all $u\in E$. 
     
Let us extend $f$ by $0$ outside $X_0$ and still call $f$ the extension on $X$. Define then
\begin{displaymath}
         L^* v =f \quad\mbox{ in } \quad X ,\qquad v=0\quad\mbox{ on } \quad \partial X.
\end{displaymath}
Here $L^*=\nabla\cdot a^*\nabla - b^*\cdot\nabla + c^*$ is the formal adjoint to $L$. Note that $v$ is well-defined since $\Sf_\cf$ in \eqref{eq:soloperator} is assumed to be bounded. Integrations by parts show that
\begin{displaymath}
   (Lu,v)-(u,L^*v) = \dint_{\partial X} (an\cdot\nabla u v^* - an \cdot\nabla v^* u - b\cdot n u v^*) d\sigma
    = \dint_{\partial X} an\cdot\nabla v^* u d\sigma.
\end{displaymath}
Since this holds for any function $u\in H^1(X)$ and hence for any $u_{|\partial X}\in H^{\frac12}(\partial X)$, we deduce that 
$ a^*n\cdot\nabla v=0$ on $\partial X$. We thus find
\begin{displaymath}
    L^*v =0 \quad\mbox{ in } \quad X\backslash X_0 ,\qquad v=0\,\mbox{ and }\, a^*n\cdot\nabla v =0 \quad\mbox{ on } \quad \partial X_0.
\end{displaymath}
We use the unique continuation assumption to deduce that $v\equiv0$ in $H^1(X\backslash X_0)$ so that $v=0$ in the $H^{\frac12}(\partial X_0)$ sense and $a^*n\cdot\nabla v=0$ in the $H^{-\frac12}(\partial X_0)$ sense. For any $u\in E$, we thus find that $(f,u)=0$, which thus proves that $\bar F=E$. This shows that  $u-u_0$ is arbitrarily small in  $L^2(X_0)$. Now regularity results as they are written for instance in \cite[Theorem 6.2.5]{M-SP-66} for elliptic problems with complex coefficients such that $(a,b,c,\nabla\cdot a)$ are of class $C^{0,\alpha}$ for $\alpha>0$ (see also \cite[Theorem 17.2.7]{H-III-SP-94}), allow us to conclude that \eqref{eq:receps} holds.  Indeed, we have an equation $L(u_0-u_\eps)=0$ on $X_0$. We then get the required interior regularity of $u_0-u_\eps$ in $C^{2,\alpha}(B(0,\mu r_0))$  for all $\mu<1$. 
\end{proof}

We now conclude the proof of Theorem \ref{thm:global}.
The  uniqueness to the Cauchy problem is guaranteed by Lemma \ref{lem:UCP}. We have obtained, using the Runge approximation, the construction of a family $u_j$ for an open set of boundary conditions $f_j$  such that the hypotheses of Lemma \ref{lem:localclass} are satisfied on $X_0$. It remains to cover $X$ by a finite number of balls of radius $\mu r_0$ (for $\mu<1$ as necessary to apply the Runge approximation result) and to apply Lemma \ref{lem:localclass} globally on $X$ and obtain a unique and stable reconstruction of $\cf$ on $X$.
\end{proof}

\section{Applications to coupled-physics inverse problems}
\label{sec:appli}

The salient feature of coupled-physics inverse problems (also known as hybrid inverse problems) is that they involve a high resolution modality and a high contrast modality to obtain a coupled (hybrid) modality imaging combining both high contrast with high resolution. We consider three such families of coupled physics inverse problems that may be modeled by the theory developed in the preceding sections; quantitative photo-acoustic tomography (QPAT), transient elastography (TE), and the mathematically similar modality called magnetic resonance elastography (MRE); see \cite{B-IO-12} for a review on hybrid inverse problems.

\subsection{Quantitative Photo-Acoustic Tomography}

The first modality we consider is called quantitative photo-acoustic tomography (QPAT). The high contrast modality is optical tomography. The ultimate objective of QPAT is the reconstruction of the optical coefficients in an elliptic equation. 

Radiation propagation is modeled by the following equation
\begin{equation}
\label{eq:ellPAT}
-\nabla \cdot\gamma \nabla u_j + \sigma u_j =0 \quad \mbox{ in } X,\qquad u_j=f_j\quad \mbox{ on } \dX.
\end{equation}
Here, $\gamma$ is the real-valued diffusion tensor and $\sigma$ the real-valued absorption coefficient.

The high resolution modality is ultrasound. A first well posed inverse wave (ultrasound) problem is solved to reconstruct internal functionals of the unknown coefficients.  This first step of QPAT provides access to the following internal functionals \cite{BR-OL-11,BR-IP-11,BU-IP-10,SU-IP-09}
\begin{equation}
\label{eq:IFQPAT}
H_j(x) = \Gamma(x) \sigma(x) u_j(x) \quad \mbox{ in } X.
\end{equation}
Here $\Gamma(x)$ is the Gr\"uneisen coefficient, which is assumed to be known in this paper and, therefore, without loss of generality assumed to equal $1$.
We assume that $1\leq j \leq I$, with $I$ the number considered in the preceding sections. We assume that all coefficients are known on $\dX$ and that $f_1>0$ on $\dX$ so that $u_1>0$ by the maximum principle. Then multiplying the above equation for $u_1$ by $u_j$ and for $u_j$ by $u_1$ and subtracting the results, we get
\begin{displaymath}
   -\nabla\cdot (\gamma u_1^2) \nabla \dfrac{u_j}{u_1} = -\nabla\cdot (\gamma u_1^2) \nabla \dfrac{H_j}{H_1}  =0.
\end{displaymath}
Therefore if the $I-1$ conditions $(f_2,\ldots, f_{I})$ boundary conditions are chosen as in the preceding section, we obtain that $\gamma u_1^2$ can be uniquely and stably reconstructed.  Indeed, we are here in the setting where $b=0$, which allows one to reconstruct the gauge and hence the whole diffusion tensor $\gamma$ as indicated in Corollary \ref{cor:gauge}.

Now the equation for $u_1$ may be recast as 
\begin{equation}
  \label{eq:u1inv}
  -\nabla\cdot(\gamma u_1^2) \nabla \dfrac{1}{u_1} = H_1 \quad\mbox{ in } X,\qquad \dfrac1{u_1}=\dfrac1{f_1}\quad\mbox{ on } \dX.
\end{equation}
This uniquely determines $u_1$ and hence $\gamma_1$ in a stable fashion. Since $H_1=\sigma u_1$, this also determines $\sigma$ uniquely and stably. This concludes the derivation of the unique and stable reconstruction of $(\gamma,\sigma)$ from QPAT measurements when the Gr\"uneisen coefficient is known.

Note that the same elliptic equation \eqref{eq:ellPAT} with $\sigma=0$ has been used to reconstruct a scalar diffusion coefficient from knowledge of $u$ by solving the transport equation \eqref{eq:ellPAT} for $\gamma$ with applications in underground water flows \cite{A-AMPA-86,R-SIAP-81}.

\subsection{Coupled-physics methods based on Elastography}

In this section, the high contrast modality is elastography; see \cite{MZM-IP-10} and reference there for more details. The elastic (stiffness) properties of tissues are to be reconstructed. We assume here that the elastic displacements are modeled by a time-harmonic scalar equation of the form
\begin{equation}
\label{eq:elast}
\nabla\cdot \gamma(x) \nabla u_j + \omega^2\rho(x) u_j =0 \quad\mbox{ in } X,\qquad u_j=f_j \quad \mbox{ on } \dX.
\end{equation}
Here, $\gamma$ is a tensor-valued, possibly complex-valued, Lam\'e parameter and $\rho$ is a density that may also be complex-valued in full generality. The reason for these coefficients to be complex-valued is that elastic waves are attenuated by various dispersion effects. In the frequency domain, such attenuation effects take the form of complex-valued coefficients. Elastographic tomography was one of the main motivations to consider the reconstruction of complex-valued coefficients in the preceding section. 

In transient elastography (TE), the high resolution modality is again ultrasound. As comparatively slow elastic waves propagate through the domain $X$ of interest, ultrasound measurements are used to infer the internal displacements, i.e., the solution of the Helmholtz equation \eqref{eq:elast}. In Magnetic resonance elastography (MRE), the high resolution modality is magnetic resonance. Along with elastic displacements, proton displacements occur that can be measured by an MRI machinery. 
  
In both modalities, the internal functionals obtained by ultrasound in TE and by magnetic resonance imaging in MRE are given by the displacements:
\begin{equation}
  \label{eq:HTE}
  H_j(x) = u_j(x) \quad \mbox{ in } X.
\end{equation}
This is exactly the setting considered in Theorem \ref{thm:main}. Note, however, that in many applications of elastography, the scalar model considered here is not sufficiently accurate. Generalizations to more precise models of linear or nonlinear elasticity then need to be developed.

\section*{Acknowledgment} The authors would like to thank Joyce McLaughlin for fruitful discussions on Elastography and Yu Yuan for pointing out the reference \cite{M-SP-66}.  They are indebted to C\'edric Bellis and Fran\c cois Monard for their careful reading of the manuscript and their suggestions.   Part of this work was carried out during the program on Inverse Problems at the Newton Institute in 2011, and the authors would like to express their gratitude to the Newton Institute and the organizers of the program.
GB was partially funded by grants NSF DMS-1108608 and DMS-0804696. GU was partially funded by the NSF and a Rothschild Distinguished Visiting Fellowship at the Newton Institute. 

{\small

}
\end{document}